\newtheorem{theorem}{Theorem}
\newtheorem{definition}{Definition}
\newtheorem{lemma}[theorem]{Lemma}
\newtheorem{proposition}[theorem]{Proposition}
\newtheorem{corollary}[theorem]{Corollary}
\newtheorem{problem}{Problem}
\newtheorem{conjecture}{Conjecture}
\newcommand{\E}{\mathbb{E}}
\newcommand{\numkmols}[2]{L^{(#1)}(#2)}
\DeclarePairedDelimiter{\parens}{(}{)}
\DeclarePairedDelimiter{\set}{\{}{\}}
\DeclarePairedDelimiter{\brackets}{[}{]}
\DeclarePairedDelimiter{\size}{|}{|}
\newcommand{\dif}{\mathop{}\!\mathrm{d}}
\title{Enumerating extensions of mutually orthogonal Latin squares}
\author{Simona Boyadzhiyska\thanks{Institut f\"ur Mathematik, Freie Universit\"at Berlin, 14195 Berlin}~
        \thanks{E-mail: {\tt s.boyadzhiyska@fu-berlin.de}. Research supported by the Deutsche Forschungsgemeinschaft (DFG) Graduiertenkolleg ``Facets of Complexity'' (GRK 2434).}
    \and
        Shagnik Das\footnotemark[1]~
        \thanks{E-mail: {\tt shagnik@mi.fu-berlin.de}.  Research supported in part by GIF grant G-1347-304.6/2016 and by the Deutsche Forschungsgemeinschaft (DFG) - project 415310276.}
    \and
        Tibor Szab\'o\footnotemark[1]~
        \thanks{E-mail: {\tt szabo@math.fu-berlin.de}.  Research supported in part by GIF grant G-1347-304.6/2016.}}
\begin{document}
\maketitle 

\begin{abstract}
Two $n \times n$ Latin squares $L_1, L_2$ are said to be \emph{orthogonal} if, for every ordered pair $(x,y)$ of symbols, there are coordinates $(i,j)$ such that $L_1(i,j) = x$ and $L_2(i,j) = y$. A \emph{$k$-MOLS} is a sequence of $k$ pairwise-orthogonal Latin squares, and the existence and enumeration of these objects has attracted a great deal of attention.

Recent work of Keevash and Luria provides, for all fixed $k$, log-asymptotically tight bounds on the number of $k$-MOLS. To study the situation when $k$ grows with $n$, we bound the number of ways a $k$-MOLS can be extended to a $(k+1)$-MOLS. These bounds are again tight for constant $k$, and allow us to deduce upper bounds on the total number of $k$-MOLS for all $k$. These bounds are close to tight even for $k$ linear in $n$, and readily generalise to the broader class of gerechte designs, which include Sudoku squares.
\end{abstract}

\section{Introduction}\label{sec:intro}

Latin squares have a long and storied history in combinatorics, sharing connections to several other areas of mathematics and enjoying applications in statistics and experimental design.  In particular, orthogonal Latin squares are equivalent to many other classical structures in design theory, and their study dates back to Euler.  In this paper we shall answer questions concerning the enumeration of orthogonal Latin squares, but we first present some of the relevant background.

\subsection{Background and related work}\label{sec:background}

We begin by recalling the definition of a Latin square, more in an effort to present our notation than in belief that you, the reader, do not know what a Latin square is.

\begin{definition}\rm \label{def:latinsq}
A {\em Latin square of order $n$} is an $n\times n$ matrix with entries in $[n]$ such that each $x\in[n]$ appears exactly once in every row and in every column.
\end{definition}

It is not difficult to see that Latin squares exist for all $n$; indeed, a rich class of constructions are given by the Cayley tables of groups.  We refer the reader to~\cite{van2001course} for more definitions, results and proofs related to Latin squares, noting only that the number of Latin squares is log-asymptotically given by
\begin{align}\label{eq:numlatsq}
    L(n) = \parens*{(1+o(1))\frac{n}{e^2}}^{n^2}.
\end{align}
Ryser~\cite{ryser1969} showed that the lower bound follows from Van der Waerden's conjecture on permanents of matrices, which was famously later proven by Egorychev~\cite{egorychev1981} and Falikman~\cite{falikman1981}.  The upper bound is also closely related to permanents, as it is a consequence of Br\'egman's Theorem~\cite{bregman1973} (see \cite[Chapter 17]{van2001course} for details).

\smallskip

In this paper, we will be concerned with orthogonal Latin squares.

\begin{definition}\rm \label{def:orthlatinsq}
Two Latin squares $L, L'$ of order $n$ are said to be {\em orthogonal} if, for all pairs $(x,y) \in [n]^2$, there exist unique $i,j\in [n]$ such that $L(i,j) = x$ and $L'(i,j) = y$. In this case, $L'$ is said to be an {\em orthogonal mate} of $L$. 

A $k$-tuple of Latin squares $(L_1,\dots, L_k)$ forms a system of {\em $k$ mutually orthogonal Latin squares}, or a {\em $k$-MOLS}, if for all $1\leq i < j\leq k$, the squares $L_i$ and $L_j$ are orthogonal. 
\end{definition}

Orthogonal Latin squares have proven interesting from both a theoretical and a practical point of view: they are related to various other classes of structures in design theory, some of which we shall encounter later, while having applications in many real-world problems. In light of these applications, the early research in this area concerned the existence of $k$-MOLS. In particular, there was much interest in the maximum size of a set of mutually orthogonal Latin squares; that is, the function $N(n) = \max \{ k : \textrm{a } k\textrm{-MOLS of order } n \textrm{ exists} \}$.

It is well-known that $N(n) \le n-1$.  Indeed, suppose $(L_1,\dots,L_k)$ is a $k$-MOLS. Observing that orthogonality is preserved under permutations of the symbols within each square, we may assume that the first row of each square is $[1,2,\dots, n]$. Considering the entries in position $(2,1)$, we find that all $L_i(2,1)$ must be distinct, by orthogonality, and different from 1, since the $L_i$ are Latin squares.  Hence $k \le n-1$.

One can further prove that one has equality if and only if a projective plane of order $n$ exists. This shows that the precise determination of the function $N(n)$ is likely to be difficult, as the existence of projective planes for orders $n$ that are not prime powers is a longstanding open problem. Still, several polynomial lower bounds on $N(n)$ with ever-improving exponents appear in the literature~\cite{CES1960,rogers1964,wilson1974}, with the largest one due to Lu~\cite{lu1985}, who proved $N(n) = \Omega(n^{1/14.3})$.

\smallskip

Given that large sets of mutually orthogonal Latin squares exist, it is natural to extend~\eqref{eq:numlatsq} and enumerate $k$-MOLS for $k \ge 2$. Early work in this direction was undertaken by Donovan and Grannell~\cite{donovan2013transversal}, who constructed many $k$-MOLS, and also sought to bound the number of orthogonal mates a Latin square can have. An important component of their argument is an upper bound on number of transversals in a Latin square, where a transversal is a selection of $n$ cells from the square, with no two sharing the same row, column or symbol. Taranenko~\cite{taranenko2015transversals} later proved a sharp upper bound on the number of transversals in a Latin square, which, when used in Donovan and Grannell's proof, shows that a Latin square can have at most 
\begin{equation} \label{eq:donovangrannell}
    \parens*{(1 + o(1)) \frac{n}{e^{2 + 1/e}}}^{n^2}
\end{equation}
orthogonal mates. Coupled with~\eqref{eq:numlatsq}, this can be used to give upper bounds on the number of pairs of orthogonal Latin squares and, more generally, the number of $k$-MOLS (since in a $k$-MOLS $(L_1, \hdots, L_k)$, the Latin squares $L_2, \hdots, L_k$ must all be orthogonal mates of $L_1$).

More recently, tight bounds on the number of $k$-MOLS follow from the breakthroughs of Luria~\cite{luria2017new} and Keevash~\cite{keevash2018coloured}. Through an elegant entropic argument, Luria gives a general upper bound on the number of perfect matchings a regular $r$-uniform hypergraph can have.  Assuming certain pseudorandom conditions, Keevash provides a matching lower bound, coupling randomized constructions with the use of absorbers. When applied to the enumeration of $k$-MOLS, their theorems imply the following result, where we denote by $\numkmols{k}{n}$ the number of $k$-MOLS of order $n$.

\begin{theorem}[Luria, 2017, and Keevash, 2018] \label{thm:luriakeevash}
For every fixed $k \in \mathbb{N}$, the number of $k$-MOLS of order $n$ is
\begin{equation} \label{eq:luriakeevash}
    \numkmols{k}{n} = \parens*{(1 + o(1)) \frac{n^k}{e^{\binom{k+2}{2} - 1}}}^{n^2}.
\end{equation}
\end{theorem}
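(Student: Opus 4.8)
The plan is to encode a $k$-MOLS of order $n$ as a perfect matching of a regular, uniform hypergraph and then invoke the theorems of Luria and Keevash quoted above. Identify the $k+2$ coordinates of $[n]^{k+2}$ with the row, the column, and the symbols of $L_1,\dots,L_k$, write $\binom{[k+2]}{2}$ for the set of $2$-element subsets of $[k+2]$, and let $H=H(n,k)$ be the hypergraph with vertex set $\bigl\{(\{a,b\},(s,t)):\{a,b\}\in\binom{[k+2]}{2},\ (s,t)\in[n]^2\bigr\}$ and edge set $\bigl\{e_p:p\in[n]^{k+2}\bigr\}$, where $e_p=\bigl\{(\{a,b\},(p_a,p_b)):\{a,b\}\in\binom{[k+2]}{2}\bigr\}$ records the $2$-coordinate projections of the point $p$. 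Then $H$ has $\binom{k+2}{2}n^2$ vertices split into $\binom{k+2}{2}$ classes of size $n^2$, it is $\binom{k+2}{2}$-uniform, and each vertex lies in exactly $n^k$ edges (fixing two coordinates of $p$ leaves $k$ free), so $H$ is $n^k$-regular with $n^{k+2}$ edges and its perfect matchings have $n^2$ edges. A family of $n^2$ points of $[n]^{k+2}$ is a perfect matching of $H$ exactly when each of its $\binom{k+2}{2}$ two-coordinate projections is a bijection onto $[n]^2$; unwinding this, the row--column projection says every cell receives a unique symbol-tuple, the $k$ row--symbol and $k$ column--symbol projections say each $L_a$ is a Latin square, and the $\binom{k}{2}$ symbol--symbol projections say the $L_a$ are pairwise orthogonal. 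Hence $\numkmols{k}{n}=\mathrm{pm}(H)$; and since $1+2k+\binom{k}{2}=\binom{k+2}{2}$, the goal is to show $\mathrm{pm}(H)=\bigl((1+o(1))\,d/e^{r-1}\bigr)^{N/r}$ with $r=\binom{k+2}{2}$, $d=n^k$ and $N=rn^2$.

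The upper bound is then a direct application of Luria's theorem~\cite{luria2017new}, which bounds the number of perfect matchings of an $r$-uniform, $d$-regular hypergraph on $N$ vertices by $\bigl((1+o(1))\,d/e^{r-1}\bigr)^{N/r}$ provided its codegrees are sufficiently small. To check the codegree hypothesis I would argue directly in $H$: two vertices prescribe values on two $2$-subsets of the coordinates, and the number of points $p\in[n]^{k+2}$ consistent with both is $0$ if the prescriptions conflict, $n^{k-1}$ if the two subsets overlap in one coordinate on which they agree, and $n^{k-2}$ if the subsets are disjoint. Thus every codegree is at most $n^{k-1}=d^{1-1/k}=o(d)$ for fixed $k$, well inside the admissible range, and Luria's theorem yields $\numkmols{k}{n}\le\bigl((1+o(1))\,n^k/e^{\binom{k+2}{2}-1}\bigr)^{n^2}$.

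For the lower bound I would feed the same hypergraph $H$ into Keevash's results on counting coloured designs~\cite{keevash2018coloured}, which yield $\mathrm{pm}(H)\ge\bigl((1-o(1))\,d/e^{r-1}\bigr)^{N/r}$ once $H$ is shown to satisfy the required pseudorandomness conditions; combined with the upper bound this gives the claimed equality. The content of this step is that the codegree computation above returns not merely small values but \emph{exactly} the ``ideal'' counts $n^{k-1}$ and $n^{k-2}$ that a random-like $\binom{k+2}{2}$-uniform hypergraph on these vertex classes would produce, and more generally that fixing any bounded set of vertices of $H$ leaves precisely the expected number of completions to a point of $[n]^{k+2}$; verifying these typicality and extendability estimates is where the real work lies. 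Beyond that I expect only bookkeeping: making the correspondence between perfect matchings of $H$ and $k$-MOLS precise, and checking --- uniformly in the parameters --- that $H$ meets the exact hypotheses under which the two quoted theorems are stated. The theorem then follows by substituting $r=\binom{k+2}{2}$, $d=n^k$ and $N/r=n^2$.
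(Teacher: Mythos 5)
Your proposal is correct and follows essentially the same route as the paper, which states Theorem~\ref{thm:luriakeevash} as a direct consequence of Luria's and Keevash's theorems rather than proving it from scratch; your hypergraph encoding (perfect matchings of the $\binom{k+2}{2}$-uniform, $n^k$-regular ``pair-projection'' hypergraph correspond to $k$-MOLS) together with the parameter and codegree checks is precisely the standard way those black-box results are applied here. The only caveat, which you yourself flag, is that the verification of Keevash's pseudorandomness hypotheses for the lower bound is deferred to the cited works rather than carried out, exactly as in the paper.
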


\subsection{Results}\label{sec:results}

The one drawback of Theorem~\ref{thm:luriakeevash} is that both the lower and upper bounds in \eqref{eq:luriakeevash} require $k$ to be fixed as $n$ tends to infinity.  In this paper we seek upper bounds that hold when $k$ grows with $n$.  We combine the approach of Donovan and Grannell~\cite{donovan2013transversal} with the method of Luria~\cite{luria2017new}, using entropy to bound the number of ways of extending a $k$-MOLS by adding an additional Latin square.

Before presenting our upper bound, let us discuss a lower bound for this number of extensions. Since every $(k+1)$-MOLS contains a $k$-MOLS as a prefix, Theorem~\ref{thm:luriakeevash} implies that, for fixed $k \in \mathbb{N}$, the average number of extensions of a $k$-MOLS to a $(k+1)$-MOLS is at least
\begin{equation} \label{eq:extensionlowerbound}
    \frac{L^{(k+1)}(n)}{L^{(k)}(n)} = \parens*{(1+o(1)) \frac{n}{e^{k+2}}}^{n^2}.
\end{equation}

This clearly gives a lower bound for the maximum number of such extensions. In the following theorem, we provide an upper bound that is valid for all $k$. (Throughout this paper, all logarithms are to the base $e$.)

\begin{theorem}\label{thm:numberofextensions}
For $0 \leq k \leq n-2$, the logarithm of the number of ways to extend a $k$-MOLS of order $n$ to a $(k+1)$-MOLS is at most
\[ n^2 \int_0^1\log(1+(n-1)t^{k+2}) \dif t. \]
\end{theorem}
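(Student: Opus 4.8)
The plan is to use the entropy method in the style of Luria. Think of extending the given $k$-MOLS $(L_1,\dots,L_k)$ as choosing a new Latin square $L_{k+1}$ cell by cell, and let $X = (X_{i,j})_{i,j\in[n]}$ be the random matrix obtained by picking $L_{k+1}$ uniformly at random among all valid extensions. Then $\log(\text{number of extensions}) = H(X) = \sum_{(i,j)} H(X_{i,j}\mid X_{<(i,j)})$, where $X_{<(i,j)}$ denotes the entries revealed in some fixed linear order (say row by row) before cell $(i,j)$. It therefore suffices to bound each conditional entropy $H(X_{i,j}\mid X_{<(i,j)})$, and to do this I would rather compute it in expectation over a random cell: the key is to show that for a uniformly random cell $(i,j)$ (and the random extension $X$), the entropy of the symbol placed there, given everything before it, is at most $\int_0^1 \log(1+(n-1)t^{k+2})\dif t$, after which multiplying by $n^2$ finishes the proof.

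The main step is the single-cell estimate. Fix a cell $c=(i,j)$ and condition on the history $X_{<c}$. The symbol $X_c$ must avoid: the symbols already placed in row $i$, the symbols already placed in column $j$, and — crucially, this is where orthogonality enters — for each $\ell\le k$, the symbol $X_c$ together with $L_\ell(i,j)$ must form a pair $(x, L_\ell(i,j))$ not yet used by $(L_{k+1},L_\ell)$ among the already-filled cells. Equivalently, each already-filled cell $c'<c$ "forbids" the symbol $s$ at $c$ if $s$ appears in $c'$'s row, or $s$ appears in $c'$'s column, or for some $\ell$ the pair $(X_{c'}, L_\ell(c'))=(s, L_\ell(c))$. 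The number of available symbols $A_c$ is $n$ minus the number of forbidden ones. Since $H(X_c\mid X_{<c})\le \E[\log A_c]$ by the entropy bound for a variable supported on $A_c$ values, and $\log$ is concave, I want a handle on the distribution of $A_c$.

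The heart of the argument is the following: I will introduce a uniformly random time/threshold $t\in[0,1]$ and reveal cells in a random order, so that the "position" of cell $c$ among all $n^2$ cells is $tn^2$-ish; more cleanly, I would argue that, conditioned on being at a stage where a $t$-fraction of cells are filled, the probability that a fixed symbol $s$ is still available at $c$ behaves like $t^{k+2}$. The exponent $k+2$ is exactly the count of independent "reasons" a symbol can be killed: the row ($1$), the column ($1$), and the $k$ orthogonality constraints — total $k+2$ — each surviving with probability roughly $t$ when a $t$-fraction of the square is filled. Making this heuristic rigorous is the main obstacle: one must handle the conditioning carefully (the events are not literally independent, and the counts of "killers" are random), presumably by a coupling or by a careful conditional-expectation computation along a random insertion order, showing $\E[A_c \mid \text{fraction } t] \ge$ something, or better, directly that $\E[\log A_c] \le n^2\int_0^1\log(1+(n-1)t^{k+2})\dif t$ after summing. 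The cleanest route is likely: fix the linear order, let $\pi$ be uniform on $[n^2]$ giving the rank of $c$; show $\p[s \text{ available at } c \mid \pi = m] \le \bigl(\tfrac{n^2-m}{n^2}\bigr)^{?}$ type bounds, sum a geometric-like series over the $n-1$ symbols other than the forced-by-history ones to get $1+(n-1)t^{k+2}$ inside the log, and integrate over $t=m/n^2$. I expect the bookkeeping of which symbols are "pre-forbidden by the current row/column history" versus counted by the $t^{k+2}$ term to be the delicate part, and getting the "$1+$" and the "$n-1$" exactly right (rather than $n$) is where the row/column constraints must be folded into the $k+2$ count correctly.

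Finally, once the per-cell bound $H(X_c\mid X_{<c})\le \int_0^1\log(1+(n-1)t^{k+2})\dif t$ is established (interpreting the integral as the continuous limit of the discrete average over ranks $m\in[n^2]$, with the usual $\log$-concavity / Riemann-sum comparison), summing over all $n^2$ cells via the chain rule gives $H(X)\le n^2\int_0^1\log(1+(n-1)t^{k+2})\dif t$, which is the claimed bound. The constraint $k\le n-2$ is exactly what guarantees a valid extension can have positive probability (so that the entropy computation is not vacuous), matching the classical $N(n)\le n-1$ bound recalled above.
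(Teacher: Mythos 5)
Your overall strategy---chain rule for entropy, a random reveal order, bounding each conditional entropy by the logarithm of the number of ``available'' symbols, and concavity of $\log$ to pass to an expected count---is exactly the paper's (which carries it out in the slightly more general language of nearly orthogonal arrays). However, the step you explicitly flag as ``the main obstacle'' is the central idea of the proof, and your plan does not supply it; no coupling or approximate ``$t$-fraction filled'' concentration argument is needed or used. The resolution is to condition on the realized extension $X=x$ and exploit the orthogonality of the \emph{completed} square $x$ itself. Fix a cell $\ell$ and a symbol $y\neq x_\ell$. Each of the $k+2$ constraints (the row of $\ell$, the column of $\ell$, and orthogonality to each of $L_1,\dots,L_k$) singles out a \emph{unique} other cell carrying symbol $y$ that could forbid $y$ at $\ell$: the unique other cell of that row with $x$-symbol $y$, the unique other cell of that column with $x$-symbol $y$, and, for each $m$, the unique cell $s$ with $x_s=y$ and $L_m(s)=L_m(\ell)$. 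Uniqueness holds precisely because $x$ is a valid extension. These $k+2$ cells are pairwise distinct (two coinciding would violate the orthogonality of the corresponding pair among the row square, the column square, and $L_1,\dots,L_k$), and $y$ is available at $\ell$ exactly when $\ell$ precedes all of them in the reveal order. Generating the order by i.i.d.\ uniforms $\alpha_s$ and conditioning on $\alpha_\ell=t$, the $k+2$ events $\{\alpha_s<\alpha_\ell\}$ are genuinely independent, so the availability probability is \emph{exactly} $t^{k+2}$. The true symbol $x_\ell$ is always available, giving the ``$1+$'', and the other $n-1$ symbols each contribute $t^{k+2}$, yielding $\E[N_\ell\mid\alpha_\ell=t]\le 1+(n-1)t^{k+2}$ with no Riemann-sum approximation and with the ``$n-1$'' falling out automatically.

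Two further points where your sketch would go wrong if implemented literally. First, a fixed row-by-row order does not produce the integral; you must average the entropy upper bound over a uniformly random order (legitimate, since the chain rule is valid for every order), and it is the randomness of the \emph{rank} of $\ell$, encoded by $\alpha_\ell$, that becomes the integration variable $t$. Second, Jensen's inequality must be applied only to the expectation over the order \emph{conditioned on} $\alpha_\ell$, keeping the expectation over $\alpha_\ell$ outside the logarithm; pushing all the randomness inside the logarithm would give the much weaker bound $n^2\log\bigl(1+\tfrac{n-1}{k+3}\bigr)$ instead of $n^2\int_0^1\log(1+(n-1)t^{k+2})\dif t$.
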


We will estimate the value of this integral in Lemma~\ref{lem:integral}. As a corollary, combining Theorem~\ref{thm:numberofextensions} with~\eqref{eq:extensionlowerbound} allows us to determine the number of extensions of a $k$-MOLS of fixed size. In particular, setting $k = 1$ bounds the number of orthogonal mates a Latin square can have, sharpening the bound in~\eqref{eq:donovangrannell}.

\begin{corollary} \label{cor:asymptoticextension}
For every fixed $k \in \mathbb{N}$, the maximum number of ways to extend a $k$-MOLS of order $n$ to a $(k+1)$-MOLS is
\[ \parens*{(1 + o(1)) \frac{n}{e^{k+2}}}^{n^2}. \]
\end{corollary}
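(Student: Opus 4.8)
The plan is to sandwich the maximum number of extensions between the average number of extensions (which Theorem~\ref{thm:luriakeevash} turns into a lower bound) and the quantity supplied by Theorem~\ref{thm:numberofextensions}, and then to check that the two sides agree up to the $(1+o(1))$ factor sitting in the base.

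For the lower bound, I would note that deleting the last square from any $(k+1)$-MOLS leaves a $k$-MOLS, so if $\mathrm{ext}(M)$ denotes the number of ways to extend a $k$-MOLS $M$ to a $(k+1)$-MOLS, then $\sum_M \mathrm{ext}(M) = \numkmols{k+1}{n}$, the sum ranging over all $k$-MOLS $M$ of order $n$. Hence $\max_M \mathrm{ext}(M)$ is at least the average $\numkmols{k+1}{n}/\numkmols{k}{n}$, which by Theorem~\ref{thm:luriakeevash} — using the lower bound for $\numkmols{k+1}{n}$ and the upper bound for $\numkmols{k}{n}$, both available since $k$ is fixed — equals $\parens*{(1+o(1)) n/e^{k+2}}^{n^2}$, exactly as already recorded in~\eqref{eq:extensionlowerbound}; here the exponent $k+2$ comes from $\binom{k+3}{2}-\binom{k+2}{2}=k+2$.

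For the upper bound, I would apply Theorem~\ref{thm:numberofextensions} to an arbitrary $k$-MOLS, giving $\log \mathrm{ext}(M) \le n^2 \int_0^1\log(1+(n-1)t^{k+2})\dif t$, and then show that for fixed $k$ this integral is $\log n - (k+2) + o(1)$ (this is the special case of Lemma~\ref{lem:integral} that is needed here). The substitution $u = (n-1)^{1/(k+2)}t$ rewrites the integral as $(n-1)^{-1/(k+2)}\int_0^{N}\log(1+u^{k+2})\dif u$ with $N = (n-1)^{1/(k+2)}$; splitting $\log(1+u^{k+2}) = (k+2)\log u + \log(1+u^{-(k+2)})$ and using $\int_0^N \log u\dif u = N\log N - N$ together with the convergence of $\int_0^\infty \log(1+u^{-(k+2)})\dif u$ (the integrand is $O(u^{-(k+2)})$ at infinity, integrable since $k+2>1$, and $O(\log(1/u))$ near $0$), the integral evaluates to $(k+2)\log N - (k+2) + O(N^{-1}) = \log(n-1) - (k+2) + o(1)$. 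Therefore $\log \mathrm{ext}(M) \le n^2\parens*{\log n - (k+2)} + o(n^2)$, i.e.\ $\mathrm{ext}(M) \le \parens*{(1+o(1)) n/e^{k+2}}^{n^2}$. Combining this with the lower bound from the previous paragraph yields the claim.

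The argument is essentially bookkeeping, the only genuine content being Theorem~\ref{thm:numberofextensions}. The main (and rather minor) obstacle is the asymptotic evaluation of the integral: one must isolate the constant $-(k+2) = \int_0^1 (k+2)\log t\dif t$ and verify that the contribution near $t=0$, where the ``$+1$'' inside the logarithm actually matters, is only $o(1)$. One should also confirm that the $(1\pm o(1))^{n^2}$ fluctuations coming from Theorem~\ref{thm:luriakeevash} and the $o(n^2)$ error term in the exponent above can all be absorbed into a single $(1+o(1))$ factor in the base $n/e^{k+2}$; this is immediate because $k$ is held constant.
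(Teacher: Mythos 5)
Your proposal is correct and follows essentially the same route as the paper: the lower bound is the averaging argument of~\eqref{eq:extensionlowerbound} via Theorem~\ref{thm:luriakeevash}, and the upper bound comes from Theorem~\ref{thm:numberofextensions} together with the asymptotics of the integral. The only divergence is that you evaluate $\int_0^1 \log(1+(n-1)t^{k+2})\dif t = \log(n-1) - (k+2) + o(1)$ directly by the substitution $u=(n-1)^{1/(k+2)}t$ (which is sound, and is equivalent to the paper's splitting of the integral at $t_0=(n-1)^{-1/(k+2)}$), whereas the paper invokes its quantitative Lemma~\ref{lem:integral}, whose explicit error terms are needed later for Corollary~\ref{cor:numsets} but are not required here.
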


As previously stated, our primary goal is to bound the number of $k$-MOLS when $k$ grows with $n$. We can do so by building the $k$-MOLS one Latin square at a time, using Theorem~\ref{thm:numberofextensions} to bound the number of choices at each step. In this way we can recover the upper bound of Theorem~\ref{thm:luriakeevash} when $k$ is constant, but the main novelty of this paper is the following extension to larger values of $k$.

\begin{corollary}\label{cor:numsets}
As $n\rightarrow \infty$,
\begin{enumerate}[(i)]
    \item {{$\log \numkmols{k}{n} \leq \parens*{k \log n - \binom{k+2}{2} + 1 + k^2 n^{-1/(k+2)}}n^2$ \hfill} if $k = o(\log n)$,}
    \item {{$\log \numkmols{k}{n} \leq \parens*{c(\beta) + o(1)} k n^2 \log n$ \hfill} if $k = \beta \log n$, for fixed $\beta > 0$,}
    \item {{$\log \numkmols{k}{n} \leq \parens*{\tfrac12 + o(1)} \parens*{\log k - \log \log n} n^2 \log^2 n$ \hfill} if $k = \omega(\log n)$,}
\end{enumerate}
where in (ii) we define $c(\beta) = 1 - \beta^{-1} \int_0^{\beta} x (1 - e^{-1/x}) \dif x \in [0,1]$.
\end{corollary}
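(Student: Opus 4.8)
\section*{Proof proposal for Corollary~\ref{cor:numsets}}

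The plan is to build a $k$-MOLS one Latin square at a time, multiplying together the extension bounds from Theorem~\ref{thm:numberofextensions}. Writing $I(j) = \int_0^1 \log(1 + (n-1)t^{j+2}) \dif t$ for the per-step bound when extending a $j$-MOLS to a $(j+1)$-MOLS, and using that a $0$-MOLS is the empty tuple (so the first factor counts Latin squares via~\eqref{eq:numlatsq}, or equivalently $I(0)$ bounds it), telescoping gives
\[ \log \numkmols{k}{n} \;\le\; n^2 \sum_{j=0}^{k-1} I(j). \]
Everything then reduces to asymptotically evaluating this sum in each of the three regimes, which is precisely the kind of estimate I expect Lemma~\ref{lem:integral} (referenced but not yet stated in the excerpt) to supply. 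So the first step is to pin down sharp estimates for the single integral $I(j)$ as a function of $j$ relative to $\log n$.

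The key estimate I would establish is a clean asymptotic for $I(j)$. Substituting $u = t^{j+2}$ or integrating by parts, one sees $I(j) = \log n + \int_0^1 \log\!\parens*{t^{j+2} + (1-t^{j+2})/n}\dif t$, and the correction term splits according to whether $t^{j+2}$ is large or small compared with $1/n$, i.e.\ whether $t$ exceeds the threshold $t^* = n^{-1/(j+2)}$. For $t > t^*$ the integrand is close to $(j+2)\log t$, contributing roughly $-(j+2)$ after integration; for $t < t^*$ it is close to $-\log n$, contributing about $-t^*\log n = -(j+2)\,n^{-1/(j+2)}$ up to lower-order terms. This should yield $I(j) = \log n - (j+2) + O\!\parens*{j\, n^{-1/(j+2)}}$ in the small-$j$ regime, and I would extract the analogous leading behaviour of $I(j)$ when $j/\log n$ is bounded away from $0$, where $t^* = e^{-(j+2)/\log n \cdot \log n / (j+2)}$ is order $1$ and the full integral must be retained.

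With these estimates in hand, each part is a summation. For (i), $k = o(\log n)$, I would sum $I(j) = \log n - (j+2) + O(j\,n^{-1/(j+2)})$ over $0 \le j \le k-1$: the main term gives $k\log n$, the linear term sums to $\sum_{j=0}^{k-1}(j+2) = \binom{k+1}{2} + k = \binom{k+2}{2} - 1$, matching the Keevash--Luria exponent, and the error sums to $O(k^2 n^{-1/(k+2)})$ since $n^{-1/(j+2)}$ is increasing in $j$ and so is dominated by its $j = k-1$ value. For (ii), $k = \beta\log n$, I would recognise the sum $\sum_{j=0}^{k-1} I(j)$ as a Riemann sum: writing $j = x\log n$ with $x$ ranging over $[0,\beta)$ in steps of $1/\log n$, the rescaled integral $I(x\log n)/\log n \to 1 - x(1 - e^{-1/x})$ (the threshold contribution $t^* = n^{-1/(j+2)} \to e^{-1/x}$ drives this limit), so $\sum_j I(j) \approx \log n \cdot \log n \int_0^\beta (1 - x(1-e^{-1/x}))\dif x = c(\beta)\,\beta\,n^2{-}\text{free}$, giving $\log\numkmols{k}{n} \le (c(\beta)+o(1))\,k\,n^2\log n$ after using $k = \beta\log n$. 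For (iii), $k = \omega(\log n)$, the dominant contribution comes from the large-$j$ steps where $t^* = n^{-1/(j+2)} = e^{-\log n/(j+2)} \to 1$, so $I(j) \approx \log n$ for all but a negligible fraction of indices; a more careful second-order expansion of $1 - t^*$ and of the retained integral should produce the stated $\tfrac12(\log k - \log\log n)\log^2 n$ leading term, and I would sum this over the $k$ steps.

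The main obstacle I anticipate is the uniformity and precision of the estimate for $I(j)$ across the transition regime, particularly getting part (iii) sharp to the stated constant $\tfrac12$: here one cannot simply bound $I(j)$ by $\log n$ but must control the second-order deficit $\log n - I(j)$, which behaves like $\tfrac12(\log n)^2/(j+2)$ to leading order, and sum these deficits $\sum_{j}\tfrac12(\log n)^2/(j+2) \approx \tfrac12(\log n)^2\log k$ against the crude $k\log n$ main term. Reconciling the bookkeeping so that the $\log\log n$ correction emerges correctly, and verifying that the Riemann-sum convergence in (ii) is uniform enough to pass from the discrete sum to $c(\beta)$ with only $o(1)$ error, are the delicate points; the small-$k$ case (i) and the telescoping itself are routine by comparison.
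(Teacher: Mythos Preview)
Your telescoping setup and the treatment of parts (i) and (ii) match the paper's proof: Lemma~\ref{lem:integral} delivers precisely the threshold estimate $I_d \le \log(n-1) - d\bigl(1 - (n-1)^{-1/d}\bigr) + O(1/d)$ that you anticipate, and the paper identifies the same Riemann sum converging to $\int_0^\beta \bigl(1 - x(1-e^{-1/x})\bigr)\,dx$ for part (ii).

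Part (iii), however, contains a genuine error. When $j \gg \log n$ the threshold $t^* = n^{-1/(j+2)} \to 1$, but this means the region $[t^*,1]$ on which the integrand is of order $\log n$ has \emph{vanishing} measure $1-t^* \approx \tfrac{\log n}{j+2}$, so $I(j) \to 0$, not $I(j) \approx \log n$ as you claim. Expanding $1-t^* \approx \tfrac{\log n}{j+2} - \tfrac12\bigl(\tfrac{\log n}{j+2}\bigr)^2$ one finds $I(j) \sim \tfrac12 (\log n)^2/(j+2)$: it is $I(j)$ itself, not the ``deficit'' $\log n - I(j)$, that behaves like $\tfrac12(\log n)^2/(j+2)$. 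With your inverted claim the sum would come out as $k\log n - O\bigl((\log n)^2 \log k\bigr) \sim k\log n$, far larger than the target bound. Once the inversion is corrected, summing $\tfrac12(\log n)^2/(j+2)$ over $\log n \lesssim j \le k$ does indeed give $\tfrac12(\log n)^2(\log k - \log\log n)$, and the contribution from $j \le \log n$ is $O\bigl((\log n)^2\bigr)$, so your direct route can be salvaged.

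The paper takes a shorter path for (iii): rather than re-estimating $I(j)$ in the large-$j$ regime, it simply lets $\beta \to \infty$ in the formula from (ii). Using $x(1-e^{-1/x}) = 1 - \tfrac{1}{2x} + O(x^{-2})$ gives $\int_0^\beta x(1-e^{-1/x})\,dx = \beta - \tfrac12\log\beta + O(1)$, whence $c(\beta)\,\beta = \tfrac12\log\beta + O(1)$, and substituting $\beta = k/\log n$ yields the stated bound directly.
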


Note that we trivially have $\numkmols{k}{n} \le L(n)^k$, which, in light of~\eqref{eq:numlatsq}, gives the upper bound \linebreak $\log \numkmols{k}{n} \le k n^2 \log n$. Corollary~\ref{cor:numsets} provides a significant improvement over this trivial bound. Furthermore, part (i) shows that the upper bound from Theorem~\ref{thm:luriakeevash} is valid whenever $k = o \parens*{\tfrac{\log n}{\log \log n}}$.

It is well-known that mutually orthogonal Latin squares are equivalent to many other combinatorial structures such as transversal designs, nets and orthogonal arrays, while also being related to certain error correcting codes and affine and projective planes (see \cite{van2001course}), and so our results give upper bounds for the number of structures in each of these classes. In fact, we shall prove Theorem~\ref{thm:numberofextensions} for the more general class of gerechte designs (see Theorem~\ref{thm:numberextensions}), which allow us to, for instance, bound the number of sets of mutually orthogonal Sudoku squares. We discuss this particular extension further in our concluding remarks.

\subsection{Organization}

The remainder of this paper is organized as follows. In Section~\ref{sec:tools}, we introduce gerechte designs and discuss an equivalent formulation that will be more convenient for our proof. We also review some basic notions about our main tool, entropy.  Following that, we prove our results in Section~\ref{sec:mainresult}. We then provide explicit constructions of Latin squares with many orthogonal mates in Section \ref{sec:constructions}, and close with some further remarks and open problems in Section~\ref{sec:conclusion}.

%%%%%%%%%%%%%%%%%%%%%%%%%%%%%%%%%%%%%%%%%%%%%%%%%%%%%%%%%%%%%%%%

\section{Designs and tools}\label{sec:tools}

In this section we will introduce the frameworks of gerechte designs and orthogonal arrays, in which we will prove a generalization of Theorem~\ref{thm:numberofextensions}. We will also review some definitions and results regarding entropy that we shall require in our proofs.

\subsection{Gerechte designs}\label{sec:gerechte}

Gerechte designs, defined below, are a special class of Latin squares introduced by Behrens in \cite{behrens1956feldversuchsanordnungen}.

\begin{definition}\label{def:gerechtedesigns}\rm
Let $[n]^2 = R_1\sqcup\dots\sqcup R_n$ be a partition of $[n]^2$ into $n$ regions $R_i$ such that $|R_i| = n$ for all $i\in [n]$. A {\em gerechte design of order $n$} with respect to this partition is a Latin square with the additional property that each symbol appears exactly once in each region $R_i$.
\end{definition}

There are several natural examples of gerechte designs. For instance, if one takes the regions to be the $n$ rows (or columns) of the $n \times n$ grid, a gerechte design is simply a Latin square. If $n = m^2$, and one partitions the grid into $n$ subsquares of dimension $m \times m$, the corresponding gerechte designs are known as \emph{Sudoku squares of order $n$}. Finally, given a Latin square $L$, define the regions $R_t = \{(i,j) : L(i,j) = t\}$ for all $t \in [n]$. A gerechte design with respect to this partition is an orthogonal mate of $L$.

\smallskip

It is natural to study orthogonality between Latin squares that are gerechte designs with respect to the same partition and, more generally, to consider systems of mutually orthogonal gerechte designs. Bailey, Cameron and Connelly~\cite{bailey2008gerechte} generalized the function $N(n)$ to the setting of gerechte designs, giving upper bounds on the size of a set of mutually orthogonal gerechte designs that are tight for some orders $n$.

The counting questions concerning Latin squares discussed in the introduction can also be generalized to gerechte designs, and our method will allow us to derive bounds in this broader setting. For this, note that an $n\times n$ square with entries in $[n]$ is a Latin square if and only if it is orthogonal (in the sense of Definition \ref{def:orthlatinsq}) to the square $S_n$, given by $S_n(i,j) = i$ for all $i,j\in[n]$, and its transpose. Similarly, it is not difficult to show that an $n\times n$ square with entries in $[n]$ is a gerechte design with respect to the regions $R_1, \dots, R_n$ if and only if it is orthogonal to the squares $S_n$, $S_n^T$, and $B$, where $B$ is given by $B(i,j) = t$ if $(i,j)\in R_t$. Note that, while the squares $S_n$ and $S_n^T$ are orthogonal to each other, the square $B$ need not be orthogonal to either (that is, $B$ need not be a Latin square). 

%%%%%%%%%%%%%%%%%%%%%%%%%%%%%%%%%%%%%%%%%%%%%%%%%%%%%%%%%%%%%%%%
\subsection{Orthogonal arrays and nearly orthogonal arrays}\label{sec:oas}

When adding a square to a set of mutually orthogonal gerechte designs, we need to ensure three properties: that it is a Latin square, that it respects the regions of the design, and that it is orthogonal to the previous squares. For our proof, it will be helpful to use an equivalent but more symmetric formulation of mutually orthogonal gerechte designs, where these three properties all take the same form. We begin in the setting of mutually orthogonal Latin squares.

\begin{definition}\rm\label{def:oa}
Let $x,y$ be vectors in $[n]^{n^2}$. We say that $x$ and $y$ are {\em orthogonal} if, for all pairs $(s,t) \in [n]^2$, there exists a unique index $\ell$ such that $x_\ell = s$ and $y_\ell = t$.
An {\em orthogonal array} $OA(n,d)$ is an $n^2 \times d$ array $A$ with entries in $[n]$ such that all pairs of its columns are orthogonal.
\end{definition}
We note that in the literature orthogonal arrays are often defined more generally and Definition \ref{def:oa} describes what is known as an {\em orthogonal array with strength two and index one}. For the sake of simplicity, we omit the general definition and refer the reader to \cite{hedayat1999orthogonal} for more about orthogonal arrays.

\smallskip

Given a $k$-MOLS $(L_1,\dots,L_k)$ of order $n$, we can construct an orthogonal array $OA(n,k+2)$ by taking, for all $(i,j) \in [n]^2$, the vectors $[i,j,L_1(i,j), L_2(i,j),\dots, L_k(i,j)]$ as rows of the orthogonal array (and ordering them lexicographically). Similarly, given an $n^2\times (k+2)$ orthogonal array $A$, we can construct a $k$-MOLS of order $n$ by setting $L_j(A(\ell,1), A(\ell,2)) = A(\ell,j+2)$ for all $1\leq \ell \leq n^2$ and $1\leq j\leq k$ (in fact, any two columns of the orthogonal array can be used to coordinatize the Latin squares; here we use the first two). Notice that distinct sequences of mutually orthogonal Latin squares correspond to distinct orthogonal arrays with first two columns $v_1 = [1,\dots, 1,2,\dots, 2,\dots, n,\dots, n]^T$ and $v_2 = [1,2,\dots, n,1,2,\dots, n, \dots, 1,2,\dots, n]^T$, and hence the number of $k$-MOLS of order $n$ is the same as the number of orthogonal arrays $OA(n,k+2)$ with first columns $v_1$ and $v_2$.

We now extend these ideas to mutually orthogonal gerechte designs. Let $v_3$ be a vector in $[n]^{n^2}$ with each integer in $[n]$ appearing $n$ times. Note that $v_3$ determines a partition of the elements of $[n^2]$ (and thus $[n]^2$, after we fix a linear ordering of this set) into $n$ equally-sized regions. From the equivalence between mutually orthogonal Latin squares and orthogonal arrays and the discussion at the end of Section~\ref{sec:gerechte}, we can conclude that an $OA(n,k+2)$, whose first two columns are $v_1$ and $v_2$, and in which all other columns are also orthogonal to $v_3$, is equivalent to $k$ mutually orthogonal gerechte designs with respect to the partition determined by $v_3$. For notational convenience, we add the column $v_3$ to the array and call the resulting structure an $n^2\times (k+3)$ {\em nearly orthogonal array}.

\begin{definition} \label{def:noa}\rm
Given $n \in \mathbb{N}$ and $d \ge 3$, a \emph{nearly orthogonal array} $NOA(n,d)$ is an $n^2 \times d$ array $A$ with symbols $[n]$ such that:
\begin{itemize}
    \item[(a)] the first column is $v_1$ and the second column is $v_2$, as defined above,
    \item[(b)] each symbol in $[n]$ appears exactly $n$ times in the third column $v_3$, and
    \item[(c)] for all $i \ge 4$, the $i$th column $v_i$ is orthogonal to all other columns in $A$.
\end{itemize}
\end{definition}

Again, it follows that the number of nearly orthogonal arrays $NOA(n,k+3)$ is equal to the number of sets of $k$ mutually orthogonal gerechte designs with respect to the partition defined by $v_3$.

%%%%%%%%%%%%%%%%%%%%%%%%%%%%%%%%%%%%%%%%%%%%%%%%%%%%%%%%%%%%%%%%
\subsection{Entropy}\label{sec:entropy}
The proof of our main result is based on entropy. This method has previously given good asymptotic upper bounds for similar problems; for instance, it is used in \cite{radhakrishnan1997entropy} to prove Br\'egman's Theorem on the permanent of a matrix (which yields an asymptotically tight upper bound on the number of Latin squares), in \cite{linial2013upper} to show an upper bound on the number of Steiner triple systems, later shown to be tight in \cite{keevash2018counting}, and in \cite{glebov2016maximum} to provide a simpler proof of Taranenko's result on the maximum number of transversals in a Latin square, also shown to be tight in the same paper; see also \cite{luria2017new} for some further applications. In this section, we review some basic facts about entropy that will be used in our proof. For more on entropy, see \cite{cover1991elements}.
\smallskip

Let $X$ be a discrete random variable taking values in a given finite set $\mathcal{S}$,
and let $p(x) = \Pr[X=x]$ for all $x\in \mathcal{S}$.  
The {\em (base $e$) entropy} of $X$ is given by 
\begin{align*}
    H(X) = -\sum\limits_{x\in \mathcal{S}} p(x)\log p(x) = -\E[\log p(X)],
\end{align*}
where we adopt the convention that $0\log 0 = 0$.  The entropy of $X$ can be seen as a measure of the amount of information the random variable encodes. It is not difficult to show that 
\begin{equation} \label{eq:uniformentropy}
H(X) \leq \log |R(X)|,
\end{equation}
where $R(X) = \set{x\in \mathcal{S}: p(x) > 0}$ is the range of the random variable, with equality if and only if $X$ is uniformly distributed over $R(X)$.

This definition can be extended to multiple random variables in the natural way. We define the {\em joint entropy} of two random variables $X$ and $Y$ to be 
\begin{align*}
    H(X,Y) = - \sum\limits_{x,y} p(x,y) \log p(x,y) = -\E[\log p(X,Y)],
\end{align*}
where $p(x,y) = \Pr [X = x, Y = y]$ denotes the joint distribution of $X$ and $Y$.

The {\em conditional entropy} of $X$ given $Y$ is defined to be
\begin{align*}
    H(X|Y) = \E_Y [H(X|Y=y)] = \sum\limits_y \Pr[Y = y] H(X | Y = y).
\end{align*}
Conditional entropy gives us a way to measure how much additional information we expect to learn from $X$ once we know the value of $Y$. 
It is a simple exercise to show that the joint entropy and the conditional entropy of several random variables satisfy the following equality, known as the chain rule:
\begin{align*}
    H(X_1,\dots,X_n) = \sum\limits_{i=1}^n H(X_i | X_1,\dots, X_{i-1}).
\end{align*}

\smallskip
We end this section by outlining the basic idea behind counting proofs based on entropy. Suppose we want to obtain a bound on the size of a set $\mathcal{S}$. We sample an element $X\in \mathcal{S}$ uniformly at random. By the above discussion, we have $H(X) = \log |\mathcal{S}|$, and so an upper bound on the entropy $H(X)$ yields an upper bound on $|\mathcal{S}|$. To bound $H(X)$, we break up the random variable $X$ into simpler random variables; the chain rule then allows us to consider these new random variables one at a time.

\section{Proofs of our results}\label{sec:mainresult}

We now use the material from the previous section to prove Theorem~\ref{thm:numberofextensions} and its corollaries.

\subsection{Bounding the number of extensions}

In the language of orthogonal arrays, Theorem~\ref{thm:numberofextensions} is a statement about the number of ways to extend an orthogonal array by one column. We will in fact prove the following more general result, bounding the number of ways to extend a nearly orthogonal array by one column. Indeed, by inserting a copy of the first column in the third column (and reordering the rows if needed), one obtains a nearly orthogonal array from an orthogonal array.

\begin{theorem}\label{thm:numberextensions}
Given $n \in \mathbb{N}$ and $d \ge 3$, let $A$ be a nearly orthogonal array $NOA(n,d)$. For each row $\ell \in [n^2]$, define 
\begin{align*}
    r_\ell &= |\set{s \neq \ell: A(s,1) = A(\ell,1)\text{ and }A(s,3) = A(\ell,3)}|, \text{ and}\\
    c_\ell &= |\set{s \neq \ell: A(s,2) = A(\ell,2)\text{ and }A(s,3) = A(\ell,3)}|.   
\end{align*}
Then the logarithm of the number of ways to extend $A$ to a nearly orthogonal array with $d+1$ columns is at most
\begin{align*}
    \sum\limits_{\ell=1}^{n^2} \int_0^1\log(1+(r_\ell+c_\ell)t^{d-1}+(n-r_\ell-c_\ell-1)t^{d}) \dif t.
\end{align*}
\end{theorem}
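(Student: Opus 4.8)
The plan is to fix the nearly orthogonal array $A$ and sample uniformly at random a column $v = (v_\ell)_{\ell \in [n^2]}$ that extends $A$ to an $NOA(n,d+1)$; write $X = v$ for this random variable. By \eqref{eq:uniformentropy}, $H(X) = \log N$, where $N$ is the number of valid extensions, so it suffices to bound $H(X)$. The natural thing is to reveal the entries $v_\ell$ one at a time. But the entries interact symmetrically across all three "coordinate" columns of $A$, so rather than exposing them in the fixed order $\ell = 1, \dots, n^2$, I would follow Luria's trick: pick a uniformly random permutation $\sigma$ of $[n^2]$, independent of $X$, and expose the entries in the order $v_{\sigma(1)}, v_{\sigma(2)}, \dots, v_{\sigma(n^2)}$. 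By the chain rule,
\[
    H(X) = H(X \mid \sigma) \le \sum_{i=1}^{n^2} H\!\left(v_{\sigma(i)} \,\middle|\, \sigma,\, v_{\sigma(1)}, \dots, v_{\sigma(i-1)}\right),
\]
and by symmetry this equals $\sum_{\ell=1}^{n^2} \E_\sigma\big[ H(v_\ell \mid v_{\sigma(1)}, \dots, v_{\sigma(\sigma^{-1}(\ell)-1)}, \sigma) \big]$, i.e. for each row $\ell$ we average the conditional entropy of $v_\ell$ over the random set $P_\ell$ of rows exposed before $\ell$.

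**Bounding each conditional entropy.** Fix $\ell$ and condition on a realization of $\sigma$; let $P_\ell$ be the set of rows preceding $\ell$. I would bound $H(v_\ell \mid \text{(entries on }P_\ell))$ by $\log$ of the number of symbols still available for $v_\ell$. The new column must be orthogonal to columns $1, 2, 3$ (and to columns $4, \dots, d$, but the constraint coming from those will only help — or rather, I will not even need them; the key point is orthogonality to the first three columns plus the previously-placed entries). Because $v$ must be orthogonal to column $1$, the entry $v_\ell$ cannot repeat a value already used in any earlier row $s \in P_\ell$ with $A(s,1) = A(\ell,1)$; similarly for column $2$, and for column $3$. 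Moreover — this is where $r_\ell$ and $c_\ell$ enter — among the rows sharing column-$3$ value with $\ell$, those that also share column-$1$ value (there are $r_\ell$ of them) or column-$2$ value ($c_\ell$ of them) are "doubly constrained." For the remaining $n - r_\ell - c_\ell - 1$ rows sharing the column-$3$ value, each contributes one forbidden symbol once it is exposed before $\ell$, and the same is true for the doubly-constrained ones, but the bookkeeping differs in the exponent of $t$ — a doubly-constrained row forbids $v_\ell$ a value through either of two columns, so it behaves like it is "$t$-close" in a way that contributes $t^{d-1}$ rather than $t^d$. The upshot should be: if we let $\mathbf{1}[s \prec \ell]$ denote the event $s \in P_\ell$, the number of available symbols is at least $n$ minus the number of distinct forbidden values, and I will bound $H(v_\ell \mid \cdots) \le \log\big(\text{number of available symbols}\big)$ pointwise, then take expectations over $\sigma$.

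**The averaging computation.** The heart of the estimate is computing $\E_\sigma[\log(\#\text{available})]$. I expect the clean way is: condition on the \emph{position} of $\ell$ in $\sigma$, say $\ell$ is the $(j+1)$-th exposed; then each of the relevant earlier rows is independently (in the limit, or exactly after a short calculation) in $P_\ell$ — more precisely, the set $P_\ell$ is a uniformly random $j$-subset of $[n^2] \setminus \{\ell\}$. Using concavity of $\log$ (Jensen) in the right place, or a direct convexity argument, the contribution of row $\ell$ becomes an integral: writing $t = j/(n^2)$ and passing $\sum_j \frac{1}{n^2}(\cdots) \to \int_0^1 (\cdots)\dif t$, the count of surviving symbols concentrates around $1 + (r_\ell + c_\ell) t^{d-1} + (n - r_\ell - c_\ell - 1) t^d$ up to lower-order terms. (The exponents $d-1$ and $d$ arise because a singly-constrained row is forbidden only once $t$-fraction of rows are placed, giving a factor $t$ per constraining column, and there are $d$ such columns for a generic row versus effectively $d-1$ "independent" ones for a doubly-constrained row; I would need to chase the exact combinatorics of the pseudorandom orthogonality here.) Summing over $\ell$ yields the claimed bound.

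**Main obstacle.** The routine part is the chain-rule/symmetrization setup. The genuinely delicate step is the second one: correctly counting, for a fixed exposure order, exactly which symbols are forbidden for $v_\ell$ and showing this is captured by the three "types" of constraining rows (row-and-region, column-and-region, region-only), so that after averaging over $\sigma$ one gets precisely the exponents $d-1$ and $d$ rather than something weaker. Getting the constant "$1+$" and the split $r_\ell + c_\ell$ versus $n - r_\ell - c_\ell - 1$ exactly right — as opposed to an off-by-one or an extra error term that does not vanish — is the crux, and I expect it requires a careful argument that the different orthogonality constraints do not "double-forbid" the same symbol more often than the formula allows, or an argument that any such over-counting only helps the bound.
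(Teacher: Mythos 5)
Your setup --- sampling a uniform valid extension, exposing its entries in a uniformly random order, bounding each conditional entropy by the logarithm of the number of still-available symbols, and applying Jensen's inequality conditionally on the position of $\ell$ before passing to an integral --- is exactly the architecture of the paper's proof (which implements the random order via i.i.d.\ labels $\alpha_\ell \sim U[0,1]$, so that, conditioned on $\alpha_\ell = t$, the events ``row $s$ is exposed after $\ell$'' are independent with probability $t$ each and the inner expectation is computed exactly rather than up to lower-order terms). However, the step you yourself flag as the crux is genuinely missing, and your stated plan for it would not deliver the claimed exponents. You propose to derive the forbidden symbols from orthogonality to columns $1,2,3$ only, remarking that columns $4,\dots,d$ ``will not even be needed''; but an argument using only three columns can associate to each symbol at most three blocking rows, and hence yields availability probabilities $t^2$ and $t^3$ --- i.e.\ the bound $\int_0^1\log(1+(r_\ell+c_\ell)t^2+(n-r_\ell-c_\ell-1)t^3)\dif t$, which is far weaker than the theorem for large $d$ and would destroy the downstream corollaries.

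The missing idea is to exploit the orthogonality of the \emph{target column} $x$ (the true extension being revealed) with \emph{every} column of $A$. For each symbol $y \ne x_\ell$ and each $i \in [d]$, orthogonality of $x$ with the $i$th column of $A$ produces a unique row $s_i(y) \ne \ell$ with $x_{s_i(y)} = y$ and $A(s_i(y),i)=A(\ell,i)$; the symbol $y$ can remain available only if $\ell$ is exposed before the entire set $S(y)=\{s_i(y): i\in[d]\}$. One then shows $|S(y)|=d$ except when $s_1(y)=s_3(y)$ or $s_2(y)=s_3(y)$ (all other coincidences are excluded by the pairwise orthogonality built into an $NOA$), and these exceptional cases occur for exactly $r_\ell$ and $c_\ell$ symbols $y$ respectively (the distinctness of these symbols again following from orthogonality of $x$ with columns $1$ and $2$), giving $|S(y)|=d-1$ there. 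This is the entire source of the exponents $d-1$ and $d$ and of the split $r_\ell+c_\ell$ versus $n-r_\ell-c_\ell-1$; your ``three types of constraining rows'' picture, which counts rows near $\ell$ rather than blocking sets attached to symbols, does not by itself produce it. As a secondary point, your discretization by the position $j$ of $\ell$ would require a separate argument (the blocking events are not independent given $j$) and would only recover the integral up to error terms, whereas conditioning on the continuous label $\alpha_\ell$ gives the stated inequality exactly.
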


Observe that in the gerechte design setting, for a cell $\ell \in [n]^2$, $r_\ell$ counts the number of other cells in the same row and region as $\ell$, while $c_\ell$ counts the number of cells sharing the same column and region.

Before proving Theorem~\ref{thm:numberextensions}, we quickly derive Theorem~\ref{thm:numberofextensions}.

\begin{proof}[Proof of Theorem~\ref{thm:numberofextensions}]
As previously mentioned, a Latin square is a gerechte design with respect to the partition of the cells into their rows.  A $k$-MOLS is thus equivalent to an $NOA(n,k+3)$ with $v_3 = v_1$, and an extension to a $(k+1)$-MOLS corresponds to adding a column to obtain an $NOA(n,k+4)$.

We can thus apply Theorem~\ref{thm:numberextensions} with $d = k+3 \ge 3$. For our choice of $v_3$, we have $r_\ell = n-1$ and $c_\ell = 0$ for all $\ell \in [n^2]$. Substituting in these values, the bound on the number of extensions is
\[ \sum\limits_{\ell = 1}^{n^2} \int_0^1 \log(1 + (n-1)t^{k+2}) \dif t = n^2 \int_0^1 \log(1 + (n-1)t^{k+2}) \dif t,\]
as required.
\end{proof}

We now proceed to the proof of the general theorem.

\begin{proof}[Proof of Theorem~\ref{thm:numberextensions}]
Let $A$ be as given, and let $\mathcal{S}$ denote the set of column vectors that are valid extensions for $A$. Our goal is to bound $\size{\mathcal{S}}$. We can assume $\mathcal{S} \neq \emptyset$, otherwise we are done. Let $X \in \mathcal{S}$ be chosen uniformly at random. Then $H(X) = \log \, \size{\mathcal{S}}$, and so it suffices to bound the entropy of $X$. We will expose the coordinates of $X$ one at a time, using the chain rule to express the total entropy $H(X)$ as the sum of the conditional entropies from each successive reveal.

For $\ell \in [n^2]$, we denote the $\ell$th coordinate of $X$ by $X_{\ell}$ and, given a permutation $\pi$ of $[n^2]$, we reveal the coordinates in the order $X_{\pi(1)}, X_{\pi(2)}, \hdots, X_{\pi(n^2)}$.  The chain rule then gives
\begin{align} \label{eq:entropycalc}
    \log \, \size{\mathcal{S}} = H(X) &= \sum\limits_{j=1}^{n^2}H(X_{\pi(j)} | X_{\pi(s)}: s < j) \notag \\
    &= \sum\limits_{j=1}^{n^2}\E_{(X_{\pi(s)} : s < j )}\brackets{H(X_{\pi(j)}| X_{\pi(s)} = x_{\pi(s)}: s < j)}.
\end{align}
Given $x \in [n]^{n^2}$, let $R_{\pi(j)}(\pi,x) = R(X_{\pi(j)} | X_{\pi(s)} = x_{\pi(s)} : s < j)$ denote the range of this conditional random variable, that is, 
\[ R_{\pi(j)}(\pi, x) =  \set{ y \in [n] : \exists \, Y \in \mathcal{S}: Y_{\pi(j)} = y \text{ and } \forall s < j, Y_{\pi(s)} = x_{\pi(s)} }, \] 
and let $N_{\pi(j)}(\pi, x) = \size{R_{\pi(j)}(\pi,x)}$ be the size of this range. Note that $R_{\pi(j)}(\pi,x)$, and hence also $N_{\pi(j)}(\pi, x)$, only depends on the first $j-1$ coordinates of $x$ with respect to $\pi$; for $s \ge j$, the values $x_{\pi(s)}$ can be chosen arbitrarily without changing the range of the random variable.

Thus, by~\eqref{eq:uniformentropy}, we can bound the conditional entropy by $H(X_{\pi(j)}| X_{\pi(s)} = x_{\pi(s)}: s < j) \le \log\parens*{N_{\pi(j)}(\pi, x)}$ for all $x \in [n]^{n^2}$.  Substituting this into~\eqref{eq:entropycalc} and reordering the sum gives
\[ \log \, \size{\mathcal{S}} \le \sum\limits_{j=1}^{n^2} \sum\limits_{x\in [n]^{n^2}}\Pr[X=x] \log \parens*{N_{\pi(j)}(\pi, x)} = \sum\limits_{\ell = 1}^{n^2} \E_X \brackets{\log\parens*{N_{\ell}( \pi, x )}}. \]

This bound holds for any permutation $\pi$, and thus it holds when we average over the choice of $\pi$. We sample a uniformly random permutation of $[n^2]$ by choosing a vector $\alpha = (\alpha_\ell)_\ell$ with $\alpha_{\ell}\sim U[0,1]$ for all $1\leq \ell \leq n^2$ and defining $\pi_{\alpha} = \pi$ to be such that $\alpha_{\pi(1)} > \alpha_{\pi(2)} > \dots > \alpha_{\pi(n^2)}$.
We then have 
\begin{align*}
    \log \, \size{\mathcal{S}} &\leq \E_\alpha\brackets*{\sum\limits_{\ell=1}^{n^2}\E_X [\log (N_{\ell}(\pi,x))]} = \sum\limits_{\ell=1}^{n^2}\E_X \brackets*{\E_\alpha [\log (N_{\ell}(\pi,x))]} \\
    &= \sum\limits_{\ell=1}^{n^2}\E_X \brackets*{\E_{\alpha_{\ell}} [\E_{\alpha|\alpha_{\ell}}[\log (N_{\ell}(\pi,x))]]} \leq \sum\limits_{\ell=1}^{n^2}\E_X \brackets*{\E_{\alpha_{\ell}} [\log(\E_{\alpha|\alpha_{\ell}}[N_{\ell}(\pi,x)])]}, 
\end{align*}
where the last inequality follows from Jensen's inequality and the concavity of $y \mapsto \log y$. It therefore suffices to show that, for all $\ell\in [n^2]$ and all $x\in \mathcal{S}$, we have
\begin{equation} \label{eq:numoptions}
    \E_{\alpha_\ell}[\log(\E_{\alpha|\alpha_{\ell}}[N_{\ell}(\pi,x)])] \leq \int_0^1 \log(1 + (r_\ell + c_\ell)t^{d-1} + (n - r_\ell - c_\ell - 1)t^d) \dif t.
\end{equation}

\smallskip

We first estimate the inner expectation $\E_{\alpha|\alpha_{\ell}}[N_{\ell}(\pi, x)] = \E_{\alpha}[N_{\ell}(\pi, x)|\alpha_{\ell}]$. By the linearity of expectation, this is equal to $\sum\limits_{y\in[n]}\mathbb{P} \brackets{y \in R_{\ell}(\pi,x)|\alpha_{\ell}}$.  Unfortunately, it is not straightforward to determine whether or not $y \in R_{\ell}(\pi,x)$, and so we shall instead use a simple necessary condition that we call \emph{availability}.

Recall that for the column $x$ to be orthogonal to the $i$th column of $A$, the pairs $(A(s,i),x_s)$ must be distinct for all $s \in [n^2]$. Therefore, if for some symbol $y \in [n]$ there is some column $i \in [d]$ and previously exposed coordinate $s$ such that $A(s,i) = A(\ell,i)$ and $x_s = y$, we cannot also have $x_\ell = y$. In this case we declare $y$ \emph{unavailable}, and observe that we must have $y \notin R_{\ell}(\pi, x)$.  Otherwise, if there is no such column $i$ and coordinate $s$, we say $y$ is \emph{available}.  We now seek to compute the probability that a symbol $y$ is available.

\smallskip

Fix a symbol $y \in [n]$. If $y$ is the true value of the entry in the $\ell$th coordinate of $x$, then $y$ cannot possibly have been ruled out by the previously exposed entries, and is thus available with probability $1$.

Now suppose $y \in [n]$ is not the true value of $x_\ell$. For each $i \in [d]$, since $x$ is orthogonal to the $i$th column of $A$, there must be a unique entry $s_i(y) \neq \ell$ such that $x_{s_i(y)} = y$ and $A(s_i(y),i) = A(\ell,i)$. In order for $y$ to be available, $\ell$ must be exposed before the entries in the set $S(y) = \{s_i(y) : i \in [d] \}$.

To find the probability of $y$ being available, then, we need to compute the size of $S(y)$. Suppose for distinct columns $1 \le i < j \le d$ we had $s_i(y) = s_j(y)$. It then follows that $A(s_i(y),i) = A(\ell,i)$ and $A(s_i(y),j) = A(\ell,j)$, and thus the $i$th and $j$th columns cannot be orthogonal. Since $A$ is nearly orthogonal, the only possibilities are $i \in \{1,2\}$ and $j = 3$ (by definition, all columns after the third column are orthogonal to all others, and the first two columns are orthogonal by construction).

Therefore $\size{S(y)} = d$, unless either $s_1(y) = s_3(y)$ or $s_2(y) = s_3(y)$. Note that these cannot happen simultaneously, as we have ruled out $s_1(y) = s_2(y)$, and thus in these cases we have $\size{S(y)} = d-1$. There are $r_\ell$ choices of $s \neq \ell$ for which $A(s,1) = A(\ell,1)$ and $A(s,3) = A(\ell,3)$, and hence $r_\ell$ values $y$ for which $s_1(y) = s_3(y)$. By orthogonality of $x$ with the first column of $A$, these values are all distinct. Similarly, there are $c_\ell$ choices for $y$ with $s_2(y) = s_3(y)$. 

\medskip

To summarize, there is one choice of $y$ that is available with probability $1$, there are $r_\ell + c_\ell$ choices of $y$ that are available only if the $\ell$th coordinate is exposed before some fixed set of $d-1$ other coordinates, and the remaining $n - r_\ell - c_\ell - 1$ choices of $y$ are available only if the $\ell$th coordinate precedes some $d$ other coordinates.

A coordinate $s$ is revealed after $\ell$ if $\alpha_s < \alpha_\ell$, which occurs with probability $\alpha_\ell$. Moreover, these events are independent for distinct coordinates, and so the probabilities in the latter two cases are $\alpha_\ell^{d-1}$ and $\alpha_\ell^d$ respectively. This gives
\[ \E_{\alpha|\alpha_{\ell}}[N_{\ell}(\pi,x)] = \sum\limits_{y \in [n]} \Pr \brackets{y \in R_{\ell}(\pi, x)} \le \sum\limits_{y \in [n]} \Pr \brackets{y \text{ is available}} = 1+(r_\ell + c_\ell) \alpha_\ell^{d-1} + ( n - r_\ell - c_\ell - 1) \alpha_\ell^{d}. \]
Since $\alpha_\ell$ is uniformly distributed over $[0,1]$, substituting this into $\E_{\alpha_\ell}[\log(\E_{\alpha|\alpha_{\ell}}[N_{\ell}(\pi, x)])]$ results in~\eqref{eq:numoptions}, completing the proof.
\end{proof}

\subsection{Estimating the integral}

In order to apply Theorem~\ref{thm:numberofextensions}, we need to understand the asymptotics of the bound it provides. In this next lemma, we show how to estimate the integral from the theorem.

\begin{lemma}\label{lem:integral}
Let $2\leq d\leq n$ and $I_d = \int_0^1\log(1+(n-1)t^d) \dif t$. Then
\[ I_d \leq \log \parens*{\frac{n-1}{e^d}} + \frac{d}{(n-1)^{1/d}} + \frac{3}{d (n-1)^{1/d}}. \]
\end{lemma}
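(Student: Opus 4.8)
The integrand $\log(1 + (n-1)t^d)$ transitions from roughly $0$ to $\log((n-1)t^d)$ as $t$ crosses a threshold near $t_0 = (n-1)^{-1/d}$, where $(n-1)t^d = 1$. So the plan is to split $I_d = \int_0^1 \log(1+(n-1)t^d)\dif t$ at $t_0$, bound the lower piece crudely, and on the upper piece use $\log(1+u) \le \log u + (\text{small correction})$.

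First I would handle the range $t \in [t_0, 1]$. Here I write $\log(1+(n-1)t^d) = \log((n-1)t^d) + \log(1 + (n-1)^{-1}t^{-d})$. The first term integrates exactly: $\int_0^1 \log((n-1)t^d)\dif t = \log(n-1) + d\int_0^1 \log t \,\dif t = \log(n-1) - d = \log((n-1)/e^d)$, which is precisely the main term in the claimed bound; I'd need to also subtract off the contribution of $\int_0^{t_0}\log((n-1)t^d)\dif t$, which is negative, hence helps. For the correction term $\log(1 + (n-1)^{-1}t^{-d}) \le (n-1)^{-1}t^{-d}$, integrating over $[t_0,1]$ gives $(n-1)^{-1}\int_{t_0}^1 t^{-d}\dif t = (n-1)^{-1}\cdot \frac{t_0^{1-d}-1}{d-1} \le \frac{(n-1)^{-1}t_0^{1-d}}{d-1} = \frac{(n-1)^{-1}(n-1)^{(d-1)/d}}{d-1} = \frac{(n-1)^{-1/d}}{d-1}$. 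For $t\in[0,t_0]$ I use $\log(1+(n-1)t^d)\le (n-1)t^d$, so $\int_0^{t_0}(n-1)t^d\dif t = (n-1)\frac{t_0^{d+1}}{d+1} = \frac{(n-1)^{-1/d}}{d+1}$. Adding these, plus the $-\int_0^{t_0}\log((n-1)t^d)\dif t$ term, which equals $-\big[t_0\log((n-1)t_0^d) - d t_0\big] = d t_0 = d(n-1)^{-1/d}$ (since $(n-1)t_0^d = 1$, the log vanishes), I collect: $I_d \le \log((n-1)/e^d) + \frac{d}{(n-1)^{1/d}} + \big(\frac{1}{d-1}+\frac{1}{d+1}\big)(n-1)^{-1/d}$.

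The last step is to check $\frac{1}{d-1}+\frac{1}{d+1} = \frac{2d}{d^2-1} \le \frac{3}{d}$ for $d \ge 2$, i.e.\ $2d^2 \le 3(d^2-1)$, i.e.\ $d^2 \ge 3$, which holds since $d\ge 2$. That gives exactly the stated bound $I_d \le \log((n-1)/e^d) + \frac{d}{(n-1)^{1/d}} + \frac{3}{d(n-1)^{1/d}}$.

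I don't anticipate a serious obstacle here; the only things to be careful about are (i) making sure the split point $t_0 = (n-1)^{-1/d} \le 1$ lies in $[0,1]$, which holds since $n \ge 2$ (and if $n=2$ and $d$ arbitrary, $t_0 = 1$ and the upper piece is empty, so the bound still goes through, with the convention that the empty integral is $0$); (ii) the inequality $\log(1+u) \le \min\{u, \log u + u^{-1}\}$ used on the two pieces — actually on $[t_0,1]$ I only need $\log(1+u)\le \log u + 1/u$ with $u = (n-1)t^d \ge 1$; and (iii) tracking signs so that the genuinely negative contributions (the tail of $\log((n-1)t^d)$ below $t_0$) are either dropped safely or, as above, used to cancel. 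A clean write-up would state $\log(1+x)\le x$ and $\log(1+x)\le \log x + 1/x$ (the latter from $1+x \le x e^{1/x}$) at the outset as the two elementary inequalities driving the estimate.
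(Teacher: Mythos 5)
Your proposal is correct and follows essentially the same route as the paper: split at $t_0=(n-1)^{-1/d}$, use $\log(1+x)\le x$ on $[0,t_0]$, write $\log(1+(n-1)t^d)=\log((n-1)t^d)+\log(1+(n-1)^{-1}t^{-d})$ on $[t_0,1]$, evaluate the main term exactly, bound the correction by $(n-1)^{-1}t^{-d}$, and finish with $\tfrac{1}{d-1}+\tfrac{1}{d+1}\le\tfrac{3}{d}$. The only nit is the parenthetical ``which is negative, hence helps'': subtracting that negative quantity \emph{adds} $dt_0=d(n-1)^{-1/d}$ to the bound (it is exactly the middle term of the stated estimate), as your subsequent computation correctly shows.
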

\begin{proof}
Set $t_0 = (n-1)^{-1/d}$. Note that $(n-1)t^d < 1$ if and only if $t< t_0$.
    We have 
    \begin{align*}
        I_d &= \int_0^1 \log (1+(n-1)t^d) \dif t \\
        &= \int_0^{t_0} \log (1+(n-1)t^d) \dif t + \int_{t_0}^1 \log ((n-1)t^d) \dif t + \int_{t_0}^1 \log \parens*{1+\frac{1}{(n-1)t^d}} \dif t. 
    \end{align*}
    
    We estimate the three integrals in turn: 
    \[ \int_0^{t_0} \log (1+(n-1)t^d) \dif t \leq \int_0^{t_0} (n-1)t^d \dif t = \frac{n-1}{d+1}t^{d+1} \Big|_0^{t_0} = \frac{t_0}{d+1}, \]
    where for the inequality we use the fact that $\log (1+x) \leq x$ for all $x > -1$, and in the final equality we use $t_0^d = (n-1)^{-1}$,
    \begin{align*}
        \int_{t_0}^1 \log ((n-1)t^d) \dif t &= \int_{t_0}^1 \log(n-1) + d \log t \dif t \\
        &= (1-t_0)\log (n-1) + d\parens{t\log t - t}\Big|_{t_0}^1 \\
        &= (1-t_0)\log(n-1) + (t_0-1)d +t_0\log(n-1)\\
        &= \log(n-1) + (t_0-1)d,
    \end{align*}
    where the penultimate equality again follows from $t_0^d = (n-1)^{-1}$, and
    \[ \int_{t_0}^1 \log \parens*{1+\frac{1}{(n-1)t^d}} \dif t \leq \int_{t_0}^1 \frac{1}{(n-1)t^d} \dif t = -\frac{1}{(n-1)(d-1)} + \frac{t_0}{d-1}. \]
    
Hence, we have
    \begin{align*}
        I_d &\leq \frac{t_0}{d+1} + \log(n-1) + (t_0-1)d -\frac{1}{(n-1)(d-1)} + \frac{t_0}{d-1}\\
        &\leq \log \parens*{\frac{n-1}{e^d}} + \frac{d}{(n-1)^{1/d}} + \frac{3}{d(n-1)^{1/d}},
    \end{align*}
    where we ignore the negative term and bound $\frac{1}{d+1} + \frac{1}{d-1}$ by $\frac{3}{d}$.
\end{proof}

Corollary~\ref{cor:asymptoticextension} now follows easily from Theorem~\ref{thm:numberofextensions} and Lemma~\ref{lem:integral}.

\begin{proof}[Proof of Corollary~\ref{cor:asymptoticextension}]
The lower bound comes from the average number of extensions of a $k$-MOLS, computed in~\eqref{eq:luriakeevash}. For the upper bound, Theorem~\ref{thm:numberofextensions} asserts that the logarithm of the number of extensions of a $k$-MOLS of order $n$ is, in the notation of Lemma~\ref{lem:integral}, at most $n^2 I_{k+2}$. By the lemma, this is bounded by
\[ n^2 \parens*{\log \parens*{\frac{n-1}{e^{k+2}}} + \frac{(k+2)}{(n-1)^{1/{(k+2)}}} + \frac{3}{(k+2)(n-1)^{1/{(k+2)}}}} \le n^2 \parens*{\log \parens*{\frac{n-1}{e^{k+2}}} + \frac{k+4}{(n-1)^{1/{(k+2)}}}}. \]
Since $k$ is fixed as $n$ tends to infinity, this is
\[ n^2 \parens*{\log \parens*{\frac{n-1}{e^{k+2}}} + o(1)} = n^2 \log \parens*{(1 + o(1)) \frac{n-1}{e^{k+2}}} = n^2 \log \parens*{(1 + o(1)) \frac{n}{e^{k+2}}}, \]
giving the desired upper bound.
\end{proof}

Finally, we deduce our upper bound on the number of large sets of mutually orthogonal Latin squares.

\begin{proof}[{Proof of Corollary \ref{cor:numsets}}]
    We can build a $k$-MOLS by starting with the empty $0$-MOLS, and extending it by one Latin square at a time. Theorem~\ref{thm:numberofextensions} bounds the number of possible extensions at each step, and so, in the notation of Lemma~\ref{lem:integral}, we have
    \begin{equation}
        \log \numkmols{k}{n} \le n^2 \sum\limits_{d=2}^{k+1} I_d.
    \end{equation}
    
    We shall prove each part of the corollary by estimating this sum appropriately.

\begin{enumerate}[(i)]
    \item By Lemma~\ref{lem:integral}, we have
    \[ I_d \le \log(n-1) - d + \frac{d+2}{(n-1)^{1/d}}. \]
    Hence, summing over $d$, we obtain
    \[ \sum\limits_{d=2}^{k+1} I_d \le k \log(n-1) - \parens*{\binom{k+2}{2} - 1} + \binom{k+4}{2} (n-1)^{-1/(k+2)}, \]
    from which the bound follows.
    
    \item Rearranging the bound in Lemma~\ref{lem:integral} gives
    \begin{align*}
        I_d &\leq \log(n-1) - d\parens*{1 - (n-1)^{-1/d}} + \frac{3}{d(n-1)^{1/d}} \\
        &\leq \log(n-1) - d \parens*{1 - e^{-\tfrac{\log(n-1)}{d}}} + \frac{3}{d}.
    \end{align*}
    Therefore we have
    \[ \sum\limits_{d=2}^{k+1} I_d \leq k \log(n-1) - \sum\limits_{d=2}^{k+1} d \parens*{1 - e^{-\tfrac{\log(n-1)}{d}}} + \sum\limits_{d=2}^{k+1} \frac{3}{d}. \]
    The second sum, an error term, is at most $3 \log (k+1)$. For the first sum, by making the substitution $x = \frac{d}{\log(n-1)}$, we observe that this is related to the estimation of the integral $\int x \parens*{1 - e^{-1/x}} \dif x$ by the Riemann sum with step size $1/\log(n-1)$. More precisely, we have
    \[ \frac{1}{\log(n-1)} \sum\limits_{d=2}^{k+1} \frac{d}{\log(n-1)} \parens*{1 - e^{-\tfrac{\log(n-1)}{d}}} = \int_{\tfrac{2}{\log(n-1)}}^{\tfrac{k+1}{\log(n-1)}} x \parens*{1 - e^{-1/x}} \dif x + o(1). \]
    Making the necessary substitutions and letting $n$ tend to infinity gives the claimed bound.
    
    \item In this range, we estimate $e^{-1/x}$ to observe that
    \[ x \parens*{1 - e^{-1/x}} = 1 - \frac{1}{2x} + O(x^{-2}), \]
    where the asymptotics are as $x$ tends to infinity.  Hence, when $\beta$ tends to infinity,
    \[ \int_0^{\beta} x \parens*{1 - e^{-1/x}} \dif x = \int_0^{\beta} 1 - \frac{1}{2x} + O(x^{-2}) \dif x = \beta - \tfrac12 \log(\beta) + O(1). \]
    The result then follows by substituting this into the statement of part (ii) with $\beta = \tfrac{k}{\log n}$; since the integrand $x \parens*{1 - e^{-1/x}}$ is bounded and monotone increasing for large $x$, the Riemann sum remains a good approximation of the integral when $\beta \rightarrow \infty$. \qedhere
\end{enumerate}
\end{proof}

\section{Explicit constructions}\label{sec:constructions}

Corollary~\ref{cor:asymptoticextension} establishes the existence of Latin squares with several orthogonal mates.  Given the numerous applications of orthogonal Latin squares, however, it is of great interest to have explicit constructions of such squares.  For instance, in the closely related problem of counting transversals in Latin squares, Taranenko~\cite{taranenko2015transversals} showed that a Latin square of order $n$ can have at most $\parens*{(1+o(1))\frac{n}{e^2}}^{n}$ transversals. Glebov and Luria~\cite{glebov2016maximum} later proved that Taranenko's bound is tight via a probabilistic construction. Recent results of Eberhard et al.~\cite{eberhard2019additive} and Eberhard~\cite{eberhard2017more} give a constructive proof of the theorem of Glebov and Luria, providing explicit examples of Latin squares attaining this bound (in a very precise sense).  They show that the Cayley table of any abelian group $G$ where $\sum\limits_{g \in G} g = 0$ has $\parens*{\tfrac{2 \pi n^2}{\sqrt{e}}+o(1)}\parens*{\frac{n}{e^2}}^n$ transversals. 

To see the relation between transversals and orthogonal mates, observe that the $n$ translates of any transversal in a Cayley table partition the Latin square.  For each such partition into transversals, we can construct $n!$ distinct orthogonal mates by assigning distinct symbols in $[n]$ to the $n$ transversals. The above results thus imply that these Latin squares have at least $\parens*{\sqrt{\tfrac{8 \pi^3 n^5}{e}}+o(1)}\parens*{\frac{n^2}{e^3}}^n$ orthogonal mates. This lower bound is much smaller than the upper bound we would like to match, because this simple argument only counts orthogonal mates of a very special type. Here we describe a construction of MacNeish~\cite{macneish1922} that allows us to significantly improve this bound, even if we still fall slightly short of the true maximum number of orthogonal mates given by Corollary~\ref{cor:asymptoticextension}.
\smallskip

The {\em Kronecker product} of two Latin squares $L_1$ and $L_2$ of order $n_1$ and $n_2$ respectively is the Latin square $L_1\otimes L_2$ of order $n_1n_2$ given by $(L_1\otimes L_2)((i_1,j_1),(i_2,j_2)) = (L_1(i_1,i_2), L_2(j_1,j_2))$. (Of course, the row and column indices and the symbols of $L_1\otimes L_2$ can be seen as elements of $[n_1n_2]$ after fixing an arbitrary bijection from $[n_1] \times [n_2]$.) For a Latin square $L$, we write $L^{\otimes k}$ to denote the $k$-fold product $\underbrace{L\otimes \dots \otimes L}_{k \text{ times}}$.

\begin{proposition}\label{prop:prodtwosquares}
Let $L_1$ and $L_2$ be Latin squares of order $n_1$ and $n_2$ that have $q_1$ and $q_2$ orthogonal mates respectively. Then the number of orthogonal mates of $L_1\otimes L_2$ is at least ${q_1 q_2^{n_1^2}}\frac{(n_1n_2)!}{n_1!(n_2!)^{n_1}}$.
\end{proposition}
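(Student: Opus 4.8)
The plan is to count orthogonal mates of $L_1 \otimes L_2$ by exhibiting an explicit family of them, built in two stages from the mates of $L_1$ and $L_2$ together with a combinatorial relabelling. Write $N = n_1 n_2$, and recall that an orthogonal mate of a Latin square $L$ of order $m$ is the same as a partition of the $m^2$ cells into $m$ transversals, together with an assignment of the $m$ symbols of $[m]$ to these transversals (so each Latin square with a ``fixed'' mate corresponds to $m!$ labelled mates). I would phrase everything in this transversal-partition language, as it is the cleanest way to track the Kronecker structure.

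**Key steps.**
First I would set up the bijection between the rows/columns/symbols of $L_1\otimes L_2$ and $[n_1]\times[n_2]$ via the definition given in the excerpt, so that a cell of $L_1 \otimes L_2$ is a pair $((i_1,j_1),(i_2,j_2))$ and its entry is $(L_1(i_1,i_2), L_2(j_1,j_2))$. Next, fix one orthogonal mate $M_1$ of $L_1$; this gives a partition of the $n_1^2$ cells of $L_1$ into $n_1$ transversals $T_1,\dots,T_{n_1}$. For each of the $n_1^2$ cells $(i_1,i_2)$ of $L_1$, independently choose an orthogonal mate of $L_2$, i.e.\ a partition of the $n_2^2$ cells of $L_2$ into $n_2$ transversals. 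I would then argue that the product structure ``lifts'' a transversal $T_a$ of $L_1$ together with a choice of transversal from each $L_2$-partition sitting over the cells of $T_a$ into a transversal of $L_1\otimes L_2$: since the first coordinates form a transversal of $L_1$ and, over each relevant $L_1$-cell, the second coordinates trace out a transversal of $L_2$, the resulting $N$ cells of $L_1\otimes L_2$ hit distinct rows, distinct columns and distinct symbols. Running over all $n_1$ transversals $T_a$ and all the ways of pairing up second-coordinate transversals produces a partition of $L_1\otimes L_2$ into $N$ transversals; finally, an arbitrary bijection from these $N$ transversals to $[N]$ yields an orthogonal mate.

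**Counting and the main obstacle.**
For the count: there are $q_1$ choices for $M_1$ (using all $q_1$ labelled mates of $L_1$, or $q_1$ in whatever normalization the statement intends); there are $q_2$ choices of $L_2$-partition at each of the $n_1^2$ cells of $L_1$, hence $q_2^{n_1^2}$ in total; and the relabelling of the $N$ transversals of $L_1\otimes L_2$ can be done in a controlled number of ways. The factor $\frac{(n_1 n_2)!}{n_1!\,(n_2!)^{n_1}}$ is exactly the multinomial coefficient counting how many genuinely new symbol-labellings of $L_1\otimes L_2$ we obtain beyond those already ``used up'' by the labellings of the $L_1$- and $L_2$-mates: grouping the $N$ transversals into $n_1$ blocks of size $n_2$ according to which $T_a$ they came from, the within-block and cross-block orderings interact, and $\frac{N!}{n_1!(n_2!)^{n_1}}$ is the number of ways to interleave these. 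The main obstacle I anticipate is precisely the injectivity of this construction, i.e.\ verifying that distinct choices of $(M_1, \text{the } n_1^2 \text{ } L_2\text{-partitions}, \text{relabelling})$ yield distinct orthogonal mates of $L_1\otimes L_2$ — one must check that from the resulting mate of $L_1\otimes L_2$ one can recover $M_1$ (by projecting transversals onto first coordinates), recover each local $L_2$-partition (by restricting to the fibre over a fixed $L_1$-cell), and recover the relabelling data; care is needed because a priori different bookkeeping could reassemble into the same partition. I would handle this by defining explicit ``projection'' maps that invert the construction, and checking that dividing out by the labelling redundancies of the $L_i$-mates is exactly what the multinomial factor $\frac{(n_1 n_2)!}{n_1!(n_2!)^{n_1}}$ accounts for, giving the stated lower bound $q_1 q_2^{n_1^2} \frac{(n_1 n_2)!}{n_1!(n_2!)^{n_1}}$.
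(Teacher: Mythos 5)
Your proposal is correct and follows essentially the same route as the paper: decompose $L_1\otimes L_2$ into $n_1^2$ blocks isomorphic to $L_2$, use a transversal decomposition of $L_1$ to group the blocks, stitch block-transversals into transversals of the product, and then account for the symbol-labellings via the factor $\tfrac{(n_1n_2)!}{n_1!\,(n_2!)^{n_1}}$. The injectivity/redundancy issue you flag is exactly the point the paper also addresses (by working with unordered partitions and dividing by $n_1!$ and $(n_2!)^{n_1}$ before multiplying by $(n_1n_2)!$), so carrying out the verification you describe completes the argument as in the paper.
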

\begin{proof}
    We will show how orthogonal mates of $L_1$ and $L_2$ can be combined in several ways to produce orthogonal mates of the product $L_1 \otimes L_2$.  For this, it is again useful to view an orthogonal mate as an ordered partition of $L_1 \otimes L_2$ into disjoint transversals.

    Further observe that $L_1\otimes L_2$ can be partitioned into $n_1^2$ blocks of the form $L_1(i_1, i_2) \otimes L_2$ for $i_1, i_2 \in [n_1]$. Each of these is isomorphic to $L_2$, and thus admits $q_2$ orthogonal mates.
    
    There are $q_1$ orthogonal mates of $L_1$, and thus $\tfrac{q_1}{n_1!}$ \emph{unordered} partitions of $L_1$ into disjoint transversals, say $\{T_1, \hdots, T_{n_1} \}$. In the product $L_1 \otimes L_2$, this partitions the blocks into $n_1$ disjoint sets.
    
    Let $T_j$ be one of the transversals in this decomposition of $L_1$. The corresponding blocks $T_j \otimes L_2 = \{ L_1(i_1,i_2) \otimes L_2: (i_1, i_2) \in T_j \}$ then have all distinct symbols from $[n_1]$ in the first coordinate, and hence cover each symbol in $[n_1] \times [n_2]$ precisely $n_2$ times.  To get a transversal of $L_1 \otimes L_2$, we can choose a transversal in each block $L_1(i_1, i_2) \otimes L_2$ and stitch them together. Furthermore, if we partition each block into transversals, stitching them together gives a partition of $T_j \otimes L_2$ into transversals of $L_1 \otimes L_2$.
    
    There are $q_2^{n_1}$ ways to choose orthogonal mates for each of the $n_1$ blocks in $T_j \otimes L_2$.  Here we keep the ordering, as that tells us which transversals in different blocks should be stitched together.  This gives us an ordered partition of $T_j \otimes L_2$ into $n_2$ transversals of $L_1 \otimes L_2$, and so there are $\tfrac{q_2^{n_1}}{n_2!}$ unordered partitions of this set of blocks into transversals.
    
    Making these choices for each $T_j$, we obtain a total of $\tfrac{q_1}{n_1!} \parens*{\tfrac{q_2^{n_1}}{n_2!}}^{n_1}$ partitions of $L_1 \otimes L_2$ into $n_1n_2$ disjoint transversals, each of which can easily be shown to be distinct.  To obtain an orthogonal mate, we can order these transversals arbitrarily, and thus obtain $q_1 q_2^{n_1^2} \tfrac{(n_1n_2)!}{n_1!(n_2!)^{n_1}}$ mates, as claimed. 
\end{proof}

In particular, this implies that powers of a single Latin square have many orthogonal mates.

\begin{corollary}\label{cor:blowuplatinsquare}
Let $L$ be a Latin square of order $m$ with $q$ orthogonal mates. Then $L^{\otimes k}$ is a Latin square of order $m^k$ with at least $q^{\tfrac{m^{2k}-1}{m^2-1}}$ orthogonal mates.
\end{corollary}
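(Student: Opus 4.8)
The natural approach is induction on $k$, applying Proposition~\ref{prop:prodtwosquares} repeatedly to peel off one factor of $L$ at a time.

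For the base case $k=1$ there is nothing to do: $L^{\otimes 1}=L$ has $q$ orthogonal mates by assumption, and the claimed exponent $\frac{m^2-1}{m^2-1}$ equals $1$. For the inductive step I would assume the bound for $k-1$, so that the Latin square $L^{\otimes(k-1)}$ of order $m^{k-1}$ has at least $q^{(m^{2(k-1)}-1)/(m^2-1)}$ orthogonal mates, then write $L^{\otimes k}=L^{\otimes(k-1)}\otimes L$ and invoke Proposition~\ref{prop:prodtwosquares} with $L_1=L^{\otimes(k-1)}$, $n_1=m^{k-1}$, $q_1=q^{(m^{2(k-1)}-1)/(m^2-1)}$ and $L_2=L$, $n_2=m$, $q_2=q$. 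The proposition then yields at least
\[ q_1\,q_2^{\,n_1^2}\cdot\frac{(n_1n_2)!}{n_1!\,(n_2!)^{n_1}} \;\ge\; q^{(m^{2(k-1)}-1)/(m^2-1)}\cdot q^{\,m^{2(k-1)}} \]
orthogonal mates of $L^{\otimes k}$, where I simply discard the multinomial coefficient, which is a positive integer and hence at least $1$. A one-line computation,
\[ \frac{m^{2(k-1)}-1}{m^2-1}+m^{2(k-1)}=\frac{\bigl(m^{2(k-1)}-1\bigr)+m^{2(k-1)}\bigl(m^2-1\bigr)}{m^2-1}=\frac{m^{2k}-1}{m^2-1}, \]
shows the exponent is exactly as claimed, which closes the induction.

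There is essentially no obstacle here: the argument is a bookkeeping exercise built on top of Proposition~\ref{prop:prodtwosquares}, and the error term $\frac{(n_1n_2)!}{n_1!(n_2!)^{n_1}}$ is simply thrown away. The only subtlety worth a remark is that the Kronecker product is associative and commutative only up to relabelling rows, columns and symbols via the fixed bijection $[n_1]\times[n_2]\to[n_1n_2]$; since permuting rows and columns and consistently relabelling symbols of both squares preserves orthogonality, it also preserves the number of orthogonal mates, so the identification $L^{\otimes k}=L^{\otimes(k-1)}\otimes L$ is legitimate for the purpose of this count.
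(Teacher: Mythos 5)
Your proof is correct and follows essentially the same route as the paper: induction on $k$, applying Proposition~\ref{prop:prodtwosquares} to $L^{\otimes(k-1)}\otimes L$ and discarding the multinomial factor, with the same telescoping exponent identity. The only difference is cosmetic indexing (the paper steps from $k$ to $k+1$ rather than $k-1$ to $k$).
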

\begin{proof}
    We proceed by induction. The statement is clearly true for $k=1$. Suppose it holds for some $k\geq 1$. Then, by Proposition~\ref{prop:prodtwosquares}, with $L_1 = L^{\otimes k}$, $L_2 = L$, $n_1 = m^k$, $n_2 = m$, $q_1 = \tfrac{m^{2k}-1}{m^2-1}$, and $q_2=q$, we know the number of mates of $L^{\otimes (k+1)}$ is at least 
    \[ q^{\tfrac{m^{2k}-1}{m^2-1}}q^{m^{2k}} \frac{(m^{k+1})!}{(m^k)!(m!)^{m^k}} \geq q^{\tfrac{m^{2(k+1)}-1}{m^2-1}}. \qedhere \]
\end{proof}

If we take $L$ to be the Cayley table of $\mathbb{Z}_3$, then we have $q=6$. The $k$-fold Kronecker product of the Cayley table gives the Cayley table of the product group $\mathbb{Z}_3^k$, which by Corollary~\ref{cor:blowuplatinsquare} has at least $(6^{1/8})^{3^{2k}-1}$ orthogonal mates. In the next corollary, we show that the constant in the base of the exponent can be made arbitrarily large at the cost of having a slightly less explicit construction.

\begin{corollary}\label{cor:largeconst}
For any $C > 0$, there are infinitely many orders $n$ for which we can efficiently produce Latin squares with at least $C^{n^2}$ orthogonal mates.
\end{corollary}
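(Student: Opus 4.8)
The plan is to leverage Corollary~\ref{cor:blowuplatinsquare} together with a well-chosen starting Latin square whose ratio of (log of) mate count to $m^2$ can be driven up. The point is that Corollary~\ref{cor:blowuplatinsquare} says $L^{\otimes k}$ has at least $q^{(m^{2k}-1)/(m^2-1)}$ mates, and since $(m^{2k}-1)/(m^2-1) \geq m^{2(k-1)} = n^2/m^2$ where $n = m^k$, we get at least $\parens*{q^{1/m^2}}^{n^2}$ orthogonal mates of $L^{\otimes k}$. So it suffices to find, for the given $C$, a single Latin square $L$ of some order $m$ with $q \geq C^{m^2}$ orthogonal mates; then every power $L^{\otimes k}$ works, giving infinitely many valid orders $n = m^k$.

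First I would invoke Corollary~\ref{cor:asymptoticextension} with $k=1$: the maximum number of orthogonal mates of a Latin square of order $m$ is $\parens*{(1+o(1))\tfrac{m}{e^3}}^{m^2}$. In particular, for all sufficiently large $m$, there exists a Latin square of order $m$ with at least, say, $\parens*{\tfrac{m}{e^4}}^{m^2}$ orthogonal mates, and $\tfrac{m}{e^4} \to \infty$, so once $m$ is large enough we have $\tfrac{m}{e^4} \geq C$, hence $q \geq C^{m^2}$. Fix one such $m$ and one such square $L$. Then for every $k \geq 1$, the square $L^{\otimes k}$ has order $n = m^k$ and, by the computation above, at least $\parens*{q^{1/m^2}}^{n^2} \geq C^{n^2}$ orthogonal mates. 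This produces infinitely many orders $n$ (namely all powers of $m$) as required.

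The only subtlety is the word ``efficiently'': Corollary~\ref{cor:asymptoticextension} is an existence statement, so I would note that $L$ and a single orthogonal mate of it can be found by brute-force search over $m\times m$ arrays — this takes time depending only on the fixed constant $m$, hence $O(1)$ with respect to $n$ — and then the Kronecker powers $L^{\otimes k}$ are computed directly from the definition in time polynomial in $n = m^k$. Similarly, the many orthogonal mates of $L^{\otimes k}$ guaranteed by Corollary~\ref{cor:blowuplatinsquare} are produced explicitly by the stitching construction in the proof of Proposition~\ref{prop:prodtwosquares}, which is algorithmic. So the construction is efficient once the constant-size seed $L$ is found.

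The main obstacle — really the only one — is making sure the arithmetic in passing from Corollary~\ref{cor:blowuplatinsquare} to a clean $C^{n^2}$ bound is airtight: one must check $(m^{2k}-1)/(m^2-1) \geq n^2/m^2$ and that rounding $q$ down to an integer and the $o(1)$ in Corollary~\ref{cor:asymptoticextension} do not spoil the inequality $q^{1/m^2} \geq C$; choosing $q \geq C^{m^2}$ with a little room to spare (e.g. replacing $C$ by $2C$ when selecting $m$) handles this comfortably. Everything else is a direct application of results already established.
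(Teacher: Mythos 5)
Your proposal is correct and follows essentially the same route as the paper: fix a sufficiently large constant order $m$ so that Corollary~\ref{cor:asymptoticextension} guarantees a Latin square $L$ of order $m$ with at least $C^{m^2}$ orthogonal mates (found by a brute-force search of constant size), and then apply Corollary~\ref{cor:blowuplatinsquare} to the powers $L^{\otimes k}$, using $(m^{2k}-1)/(m^2-1) \geq m^{2k}/m^2 = n^2/m^2$ to conclude. The only difference is cosmetic bookkeeping in the exponent arithmetic.
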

\begin{proof}
    Let $m$ be such that $\frac{m}{2e^3} > C$.  By Corollary~\ref{cor:asymptoticextension}, provided $m$ is sufficiently large, there is a Latin square $L$ of order $m$ with $\parens*{(1 + o(1))\frac{m}{e^3}}^{m^2} > C^{m^2}$ orthogonal mates; we can find such a square with a (finite) exhaustive search. By Corollary~\ref{cor:blowuplatinsquare}, we know that the Latin square $L^{\otimes k}$ of order $n = m^k$ has at least $C^{\tfrac{m^2(m^{2k}-1)}{m^2-1}} \geq C^{n^2}$ orthogonal mates.
\end{proof}

\section{Concluding remarks and open questions} \label{sec:conclusion}
In this paper, by bounding the number of extensions of a set of mutually orthogonal Latin squares, we obtained upper bounds on the number of $k$-MOLS when $k$ grows with $n$. The obvious question is how tight these bounds are --- can we find corresponding lower bounds? The constructions of Donovan and Grannell~\cite{donovan2013transversal}, valid for infinitely many values of $n$ when $k \le \sqrt{n}$, give lower bounds of the form $\log \numkmols{k}{n} = \Omega \parens*{\gamma(k,n) n^2 \log n}$, where $\gamma(k,n) = \max \left\{\tfrac{\log k}{k^2 \log n}, \tfrac{1}{k^4} \right\}$.  This is considerably smaller than our upper bounds in Corollary~\ref{cor:numsets}, and it would be of great interest to narrow the gap. One might hope to extend the lower bounds of Keevash~\cite{keevash2018coloured}, which were tight for constant $k$, but, as he notes in his paper, it is unclear how his methods could be used when $k$ grows.

Aside from the enumeration of $k$-MOLS, there are several other related open problems, and we elaborate on these possible directions of study below.

\paragraph{Orthogonal mates} We have bounded the maximum number of orthogonal mates a Latin square can have, but it is natural to ask if it is typical for a Latin square to have any orthogonal mates at all. Computational results in this direction are given in \cite{bryant2013number}, \cite{egan2016enumeration}, and \cite{mckay2007smalllatinsquares}. His study of squares of small order led van Rees~\cite{vanrees1990subsquares} to conjecture that, as $n\rightarrow\infty$, the proportion of Latin squares without orthogonal mates tends to one. On the other hand, having studied slightly larger orders, Wanless and Webb \cite{wanless2006existence} suggested that the opposite may be true. 

In~\eqref{eq:extensionlowerbound}, we saw that the results of Luria~\cite{luria2017new} and Keevash~\cite{keevash2018coloured} imply that the average Latin square of order $n$ has $\parens*{(1 + o(1)) \frac{n}{e^3}}^{n^2}$ orthogonal mates. Since Theorem~\ref{thm:numberofextensions} shows that this is roughly the maximum number of mates a square can have, this implies that at least $\parens*{(1 + o(1)) \frac{n}{e^2}}^{n^2}$ Latin squares must have an orthogonal mate. Unfortunately, due to the asymptotic error in the base of the exponent, this falls short of resolving the question of whether or not most Latin squares have orthogonal mates.

Some evidence that this may not be straightforward to resolve is provided in~\cite{cavenagh2017notransversals}, where Cavenagh and Wanless showed that, for almost all even $n$, there are at least $n^{(1-o(1))n^2}$ Latin squares of order $n$ without a transversal, let alone an orthogonal mate. However, Ferber and Kwan~\cite{ferberkwan2019} study the analogous question in Steiner triple systems, and show that almost all Steiner triple systems are almost resolvable. In the context of Latin squares, they say (personal communication) that their methods would show that almost all Latin squares have $(1 - o(1))n$ disjoint transversals.  Still, some new ideas would be needed to find the $n$ disjoint transversals that form an orthogonal mate.

\smallskip

In Section~\ref{sec:constructions} we showed, for any given $C>0$, that we can, for infinitely many $n$, construct Latin squares of order $n$ with at least $C^{n^2}$ orthogonal mates. Given the existence of Latin squares with many more, namely $n^{(1 + o(1))n^2}$, orthogonal mates, it is natural to seek better constructions. 
\begin{problem}
Is there an explicit construction of a Latin square of order $n$ with at least $n^{\Omega (n^2)}$ orthogonal mates? 
\end{problem}
\noindent In our product construction in Section~\ref{sec:constructions}, we only considered orthogonal mates consisting of very special kinds of transversals (those built within blocks, using transversals of the two factor squares).  It is likely that these product squares have a much larger number of orthogonal mates, perhaps even close to the maximum possible.

We have also been vague with regards to what we mean by an explicit construction. As is customary in computer science, by {\em explicit} we mean there is an algorithm that constructs the Latin square in question in time polynomial in $n$. One can go further, and call a construction {\em strongly explicit} if each individual entry of the Latin square can be determined in polylogarithmic time. One can verify that our construction in the previous section is indeed strongly explicit. Yet one feels somewhat cheated, as in the first step of the construction we perform an exhaustive search to find an initial Latin square with many orthogonal mates (whose existence is guaranteed by random methods, see Corollary~\ref{cor:asymptoticextension}). It would be desirable to find constructions that are also ``morally explicit'' in the sense that they can be described mathematically, and in particular avoid any initial brute-force search. In this direction, it would be natural to investigate whether the Cayley tables of abelian groups $G$ with $\sum\limits_{g\in G} g = 0$ give examples of such Latin squares (cf. \cite{eberhard2017more, eberhard2019additive}).

\paragraph{Affine and projective planes} As mentioned earlier, $(n-1)$-MOLS of order $n$ correspond to affine, and hence projective, planes of order $n$.  Our knowledge of lower bounds in this setting is even direr; it is conjectured that there are no such systems when $n$ is not a prime power, and believed that there is a unique (up to isomorphism) projective plane when $n$ is a prime. As a step towards proving these conjectures, one could seek to bound the number of affine/projective planes from above, a problem raised by Hedayat and Federer~\cite{hedayat1971embedding}.

In our definition of $\numkmols{k}{n}$, we do not account for isomorphism.  Thus, given a single projective plane, we can permute the symbols within each square of the corresponding $(n-1)$-MOLS to obtain $(n!)^{n-1}$ distinct $(n-1)$-MOLS. This gives a lower bound of $\numkmols{n-1}{n} \ge (n!)^{n-1} = e^{(1 - o(1)) n^2 \log n}$ whenever $n$ is a prime power. We remark that, for certain prime powers $n$, Kantor~\cite{kantor2003commutative} and Kantor and Williams~\cite{kantor2004symplectic} provide algebraic constructions of superpolynomially many non-isomorphic projective planes of order $n$, but this contributes a lower order term in the above bound.

For an upper bound, Corollary~\ref{cor:numsets} yields $\numkmols{n-1}{n} \le e^{\parens*{\tfrac12 + o(1)} n^2 \log^3 n}$. However, since a projective plane corresponds to a maximum possible set of mutually orthogonal Latin squares, it has a very restricted structure, and we can take advantage of this to obtain a better upper bound.  Given a projective plane $\Pi_n$ of order $n$, a subset $H$ of its lines is called a \emph{defining set} if $\Pi_n$ is the unique projective plane containing $H$ --- that is, the lines in $H$ determine the remaining lines in $\Pi_n$. Building on the work of Kahn \cite{kahn1992random}, Boros, Sz\H{o}nyi and Tichler~\cite{boros2005defining} showed that every projective plane admits a small defining set.
\begin{theorem}[{Boros, Sz\H{o}nyi and Tichler, 2005}]\label{thm:definingsets}
Every projective plane of order $n$ (for $n$ sufficiently large) contains a defining set of size at most $22n\log n$.
\end{theorem}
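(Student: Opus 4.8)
The plan is a probabilistic one. Take a random set $H$ of lines of the given projective plane $\Pi$, including each of the $N := n^2+n+1$ lines independently with probability $p := \frac{c\log n}{n}$ for a constant $c$ to be chosen below $22$; a Chernoff bound then gives $|H| \le 22 n\log n$ with probability $1-o(1)$, since $\mathbb{E}|H| = pN = (1+o(1))cn\log n$. So it suffices to show that $H$ is a defining set with positive probability. Now $H$ fails to be a defining set exactly when there is another projective plane $\Pi'$ of order $n$ on the same point set with $\mathcal{L}(\Pi)\cap\mathcal{L}(\Pi') \supseteq H$; writing $D(\Pi') := \mathcal{L}(\Pi)\setminus\mathcal{L}(\Pi')$, this forces $H$ to avoid $D(\Pi')$ entirely, an event of probability $(1-p)^{|D(\Pi')|}$. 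A union bound over $\Pi'$ gives
\[
  \Pr[H \text{ not defining}] \;\le\; \sum_{\Pi' \ne \Pi} (1-p)^{|D(\Pi')|} \;=\; \sum_{d \ge 1} M_d\,(1-p)^d,
\]
where $M_d$ is the number of planes at ``distance'' $d$ from $\Pi$, and the whole argument reduces to showing this sum is $o(1)$.

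Two structural facts about projective planes drive the estimate. First, $|D(\Pi')| = \Omega(n)$ for every $\Pi'\ne\Pi$: pick a line $\ell' \in \mathcal{L}(\Pi')\setminus\mathcal{L}(\Pi)$; any line $\ell\in\mathcal{L}(\Pi)$ meeting $\ell'$ in two points $x,y$ cannot lie in $\mathcal{L}(\Pi')$, since there the line through $x$ and $y$ is $\ell'\ne\ell$, so $D(\Pi')$ contains every line of $\Pi$ secant to $\ell'$. Since $\ell'$ is not itself a line of $\Pi$, no line of $\Pi$ contains more than $n$ of its $n+1$ points, and a short case analysis of how these points distribute among the lines of $\Pi$ shows there are at least $n$ such secants (the tight case is when $\ell'$ shares $n$ points with some $\ell_0\in\mathcal{L}(\Pi)$ and has one further point $z$, giving $\ell_0$ together with the $n$ distinct lines joining $z$ to the points of $\ell'\cap\ell_0$). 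Thus the sum above may start at $d=n$. Second, and this is the heart of the matter, $M_d \le N^{O(d/n)}$. For planes $\Pi' = \Pi^\sigma$ isomorphic to $\Pi$ this follows from a refinement of the secant argument giving $|D(\Pi^\sigma)| = \Omega(n\cdot|\mathrm{supp}\,\sigma|)$, together with the bound $N^k$ on the number of permutations supported on a set of size $k$. For $\Pi'$ not isomorphic to $\Pi$, one uses that $\Pi'$ is recovered from the set $D'(\Pi') := \mathcal{L}(\Pi')\setminus\mathcal{L}(\Pi)$ of its new lines (a line of $\Pi$ survives in $\Pi'$ exactly when it is secant to no new line), and that $D'(\Pi')$ is an edge-disjoint packing of $d$ copies of $K_{n+1}$ covering precisely the point-pairs covered by $D(\Pi')$, a very rigid object, so that the number of possibilities should again be $N^{O(d/n)}$.

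Granting the bound $M_d \le N^{O(d/n)}$, the sum becomes $\sum_{d\ge n} (N^{O(1/n)} e^{-p})^d$, a geometric-type series whose ratio $N^{O(1/n)} n^{-c/n}$ is $<1$ as soon as $c$ exceeds the implied constant, so its tail from $d=n$ is $o(1)$; combined with the size bound this proves the theorem, provided the implied constant plus a small slack stays below $22$, which is presumably where the precise value $22$ comes from. I expect the genuinely hard step to be the bound on $M_d$ for planes not isomorphic to $\Pi$: controlling how many ways a structured union of $d$ copies of $K_{n+1}$ can be re-partitioned into $(n+1)$-cliques, i.e.\ bounding the number of projective planes at each distance $d$ from $\Pi$. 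The secant lemma and the isomorphic (relabelling) case should be comparatively routine, and the probabilistic bookkeeping is then straightforward.
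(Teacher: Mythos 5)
First, a point of orientation: the paper does not prove this statement. It is imported as a black box from Boros, Sz\H{o}nyi and Tichler~\cite{boros2005defining} (building on Kahn~\cite{kahn1992random}) and used only to deduce Corollary~\ref{cor:numplanes}, so there is no in-paper proof to compare yours against; your sketch has to stand on its own. It does not, and the problem is more serious than the admitted omission of a hard lemma: the key counting claim $M_d \le N^{O(d/n)}$ is \emph{false} for $d$ near its maximum. For $n$ a prime power there are at least $N!/|\mathrm{Aut}(\Pi)| = e^{(2-o(1))n^2\log n}$ labelled planes on the fixed point set (all relabellings of $\Pi$ itself), every one of them at distance $d \le N \approx n^2$ from $\Pi$; so some $M_d$ with $d = O(n^2)$ is at least $e^{(2-o(1))n^2\log n}$, vastly exceeding $N^{O(d/n)} = e^{O(n\log n)}$. (Correspondingly, your refinement $|D(\Pi^\sigma)| = \Omega(n\cdot|\mathrm{supp}\,\sigma|)$ cannot hold once $|\mathrm{supp}\,\sigma| \gg n$, since $|D| \le N$ always.) This breaks the whole scheme, not just one case of it: since $(1-p)^d \ge (1-p)^N \approx e^{-cn\log n}$ for every $d$, the union bound $\sum_d M_d(1-p)^d$ over rival planes is at least $e^{(2-o(1))n^2\log n}\cdot e^{-cn\log n} \to \infty$ no matter how the planes are stratified by distance. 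A union bound over whole planes $\Pi'$ cannot work.

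The natural repair is to union-bound over much smaller witnesses: if $H$ is not defining, there is a single ``rogue line,'' i.e.\ an $(n+1)$-point set $S$ that is not a line of $\Pi$ but is a line of some rival plane, such that no line of $H$ is a secant of $S$. One then needs a structural lemma of the form ``the number of $(n+1)$-sets with at most $\sigma$ secants is at most $N^{O(\sigma/n)}$'' (a generic $(n+1)$-set has $\Theta(n^2)$ secants, and sets with few secants must be close to lines of $\Pi$), after which $\sum_{\sigma \ge n} N^{O(\sigma/n)} e^{-p\sigma}$ converges for suitable $c$. That geometric stability lemma is the real content of the theorem and is entirely absent from your sketch. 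Your secant lemma ($\Omega(n)$ secants for any non-line $(n+1)$-set) is essentially fine --- though your ``short case analysis'' needs the two regimes (large maximal intersection versus pair-counting) combined with some care, since naively they only meet at about $n^{2/3}$ --- and the Chernoff bookkeeping is routine; but as written the proposal's central estimate is wrong and the argument does not close. The constant $22$ is likewise never derived.
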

This immediately improves our upper bound.
\begin{corollary}\label{cor:numplanes}
$\numkmols{n-1}{n} \le e^{(22+o(1))n^2\log^2n}$.
\end{corollary}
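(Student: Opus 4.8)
The plan is to leverage Theorem~\ref{thm:definingsets}: since a projective plane is pinned down by a defining set of at most $22n\log n$ lines, the number of projective planes of order $n$ is controlled by the number of ways to choose such a bounded collection of lines, and each line is a very sparse subset of the point set.

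First I would pass from $(n-1)$-MOLS to projective planes on a fixed ground set. Recall that an $(n-1)$-MOLS of order $n$ is equivalent to an affine plane of order $n$ together with a labeling of its $n^2$ points, of its $n+1$ parallel classes (two of which are distinguished as ``rows'' and ``columns''), and of the $n$ lines within each class; moreover every affine plane of order $n$ has a unique projective completion, obtained by adjoining one point at infinity per parallel class and the line $\ell_\infty$ through all of them. I fix once and for all a standard identification of the $n^2+n+1$ points of this completion with $[n^2+n+1]$: cell $(i,j)$ becomes $(i-1)n+j$, and the points at infinity of the row-, column- and $t$-th square's parallel classes become $n^2+1$, $n^2+2$ and $n^2+2+t$. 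Under this identification each $(n-1)$-MOLS $\mathcal{L}$ yields a projective plane $P(\mathcal{L})$ on the vertex set $[n^2+n+1]$, viewed as a set of $(n+1)$-element subsets (its lines). If $P(\mathcal{L})=P(\mathcal{L}')$, then $\mathcal{L}$ and $\mathcal{L}'$ have the same row subsets, the same column subsets, hence the same coordinatization of the cells, so they can differ only by a relabeling of the $n$ symbols within each of the $n-1$ squares; thus every fiber of $\mathcal{L}\mapsto P(\mathcal{L})$ has size at most $(n!)^{n-1}=e^{O(n^2\log n)}$. Consequently $\numkmols{n-1}{n}\le e^{O(n^2\log n)}\cdot P_n$, where $P_n$ is the number of projective planes of order $n$ on the ground set $[n^2+n+1]$.

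Next I would bound $P_n$. For each such plane choose a defining set: by Theorem~\ref{thm:definingsets} this is a collection of at most $L:=\lceil 22n\log n\rceil$ lines, that is, $(n+1)$-subsets of $[n^2+n+1]$, and by the definition of a defining set the plane is uniquely recovered from it. Hence the map sending a plane to its chosen defining set is injective, so $P_n$ is at most the number of collections of at most $L$ subsets of size $n+1$ of an $(n^2+n+1)$-set, which is at most $(L+1)T^L$ with $T=\binom{n^2+n+1}{n+1}$. The key estimate is $T\le\parens*{\tfrac{e(n^2+n+1)}{n+1}}^{n+1}\le(e(n+1))^{n+1}$, so $\log T\le(1+o(1))n\log n$; therefore $\log P_n\le\log(L+1)+L\log T\le(22+o(1))n^2\log^2 n$. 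Combining with the previous paragraph gives $\log\numkmols{n-1}{n}\le O(n^2\log n)+\log P_n=(22+o(1))n^2\log^2 n$.

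The one point requiring care is this binomial estimate: the crude bound $T\le(n^2+n+1)^{n+1}=e^{(2+o(1))n\log n}$ would only yield the constant $44$, so one genuinely needs $\binom{m}{k}\le(em/k)^k$, which reflects the fact that a line of a projective plane of order $n$ meets only about $\sqrt{N}$ of its $N\approx n^2$ points. Everything else is routine bookkeeping: the affine/projective/MOLS dictionary is classical, and the quantities $(n!)^{n-1}$, $L+1$ all contribute only $e^{o(n^2\log^2 n)}$; for orders $n$ with no projective plane the bound is vacuous, so the ``$n$ sufficiently large'' hypothesis of Theorem~\ref{thm:definingsets} costs nothing.
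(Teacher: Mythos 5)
Your proposal is correct and follows essentially the same route as the paper: invoke Theorem~\ref{thm:definingsets} to bound the number of projective planes of order $n$ by the number of choices of $22n\log n$ lines, each an $(n+1)$-subset of the $n^2+n+1$ points (so contributing only $e^{(1+o(1))n\log n}$ each, exactly the point you flag about needing $\binom{m}{k}\le(em/k)^k$), and then pass between $(n-1)$-MOLS, affine planes, and projective planes with multiplicities of size $e^{O(n^2\log n)}$, which are absorbed into the error term. Your fiber bound of $(n!)^{n-1}$ is slightly tighter than the paper's $(n^2+n+1)\cdot(n+1)!(n!)^{n+1}$ because you fix the identification of the points at infinity, but both are lower order and the argument is the same.
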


\begin{proof}
By Theorem \ref{thm:definingsets}, each projective plane of order $n$ contains a set $H$ of $22n\log n$ lines that determine the remaining ones uniquely. Each line is a subset of size $n+1$ of the $n^2+n+1$ points. Thus, there are $\binom{n^2+n+1}{n+1} = e^{(1+o(1))n\log n}$ possible lines and at most $(e^{(1+o(1))n\log n})^{22n\log n}$ possible sets $H$ and hence projective planes of order $n$.

Each $(n-1)$-MOLS corresponds to an affine plane of order $n$ (the $n^2$ cells represent the points, two parallel classes are formed by the rows and columns, and each of the $n-1$ Latin squares labels the lines of one of the remaining parallel classes), and every affine plane has a unique extension to a projective plane (for each parallel class, we extend the lines to a common new point, and add a line at infinity consisting of the new points). In the reverse direction, a projective plane corresponds to at most $n^2 + n + 1$ different affine planes, by choosing the line at infinity. Furthermore, each affine plane corresponds to at most $(n+1)! (n!)^{n+1} = e^{(1 + o(1)) n^2 \log n}$ distinct $(n-1)$-MOLS, since we can permute the parallel classes, and the lines within parallel classes.  This contributes a lower order term, and so we can also bound $\numkmols{n-1}{n} \le e^{(22 + o(1)) n^2 \log^2 n}$.
\end{proof}

It would be of great interest to remove the extra logarithmic factor in the exponent of the upper bound, and thus reduce it log-asymptotically to the lower bound.  
\begin{conjecture} \label{conj:projplanes}
$$L^{(n-1)}(n) = e^{O\left( n^2 \log n\right)}.$$
\end{conjecture}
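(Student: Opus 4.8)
The plan is to sharpen the argument behind Corollary~\ref{cor:numplanes}. Recall that building the MOLS one square at a time via Theorem~\ref{thm:numberofextensions} gives only $\numkmols{n-1}{n}\le e^{O(n^2\log^3 n)}$ (Corollary~\ref{cor:numsets}(iii)), and that Corollary~\ref{cor:numplanes} improves this to $e^{O(n^2\log^2 n)}$ precisely because a projective plane is rigidly determined by a defining set; the surplus logarithm enters only because Theorem~\ref{thm:definingsets} produces a defining set of size $\Theta(n\log n)$, each of whose lines is then chosen from among $e^{(1+o(1))n\log n}$ candidates. My first attempt would be to replace the generic defining set of Boros, Sz\H{o}nyi and Tichler by a \emph{structured} one --- for instance all lines through a bounded number of points, or all lines of a fixed subplane. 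If some such configuration of only $O(n)$ lines were always a defining set, then, arguing exactly as in the proof of Corollary~\ref{cor:numplanes} (choosing the line at infinity and the labellings still contributes only the usual $e^{O(n^2\log n)}$ factor), we would obtain $\numkmols{n-1}{n}\le e^{O(n)\cdot O(n\log n)}=e^{O(n^2\log n)}$.

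Here's the catch: defining sets seem to require $\Omega(n\log n)$ lines, so no $O(n)$-line configuration can pin down the plane, and the defining-set method alone is intrinsically stuck near $e^{\Theta(n^2\log^2 n)}$. To do better one must exploit the \emph{relative} positions of the revealed lines, not merely their number. I would therefore expose the lines of the plane one at a time and bound the total entropy by the chain rule, as in the proof of Theorem~\ref{thm:numberextensions}: fix an ordering $p_1,\dots,p_{n^2+n+1}$ of the points and, on reaching $p_i$, reveal precisely those lines through $p_i$ that meet no $p_j$ with $j<i$. The hope is that after an initial ``seed'' of total description complexity $O(n^2\log n)$ --- essentially the cost of the pencils through the first few points --- every later line, conditioned on what has already been exposed, has only $n^{O(1)}$ completions, because two of its points already lie on a revealed line or it is forced by the parallel-class structure. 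Summing $O(\log n)$ over the $O(n^2)$ remaining lines would then keep the total at $O(n^2\log n)$.

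The crux, and the reason this is still a conjecture, is quantifying the rigidity of planes: one needs a statement of the form ``a partial linear space that agrees with a plane of order $n$ on only $o(n\log n)$ lines' worth of information extends to at most $n^{O(n)}$ planes,'' and no such tool is currently available. Indeed this is essentially equivalent to the old open problem of bounding the number of non-isomorphic planes of order $n$, since relabelling a single plane already accounts for $(n^2+n+1)!=e^{O(n^2\log n)}$ of the count. A possibly more tractable route is algebraic: coordinatise the plane by a planar ternary ring on $n$ symbols and count the admissible ``multiplication tables'' directly --- the target $e^{O(n^2\log n)}=n^{O(n^2)}$ is exactly what one expects once the cancellation constraints cut the naive $n^{n^3}$ tables down to roughly the number of $n\times n$ arrays over $[n]$. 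In every version of the plan it is the rigidity/counting step that is the genuine obstacle, and surmounting it appears to require a new idea.
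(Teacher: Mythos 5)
This statement is a conjecture in the paper; the authors do not prove it, and neither do you --- your proposal is an honest survey of possible attacks, each of which you yourself acknowledge founders on the same missing ingredient (a quantitative rigidity statement for projective planes, or equivalently a way to beat the $\Theta(n\log n)\cdot\Theta(n\log n)$ cost of the defining-set encoding). So there is a genuine gap, but it is the gap that makes the statement a conjecture rather than a theorem: no step of your outline actually establishes the bound $e^{O(n^2\log n)}$, and the ``crux'' you identify is precisely the open problem. Your framing of the difficulty is consistent with the paper's own discussion, which derives only $\numkmols{n-1}{n}\le e^{(22+o(1))n^2\log^2 n}$ (Corollary~\ref{cor:numplanes}) from Theorem~\ref{thm:definingsets} and leaves the removal of the extra logarithm open.

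One concrete correction: you assert that ``defining sets seem to require $\Omega(n\log n)$ lines, so no $O(n)$-line configuration can pin down the plane,'' and on that basis you declare the defining-set method ``intrinsically stuck'' at $e^{\Theta(n^2\log^2 n)}$. That is not known. The paper explicitly notes that the best known \emph{lower} bound on the size of a defining set is only linear in $n$, and that if every projective plane contained a defining set of $O(n)$ lines, this would immediately yield $\numkmols{n-1}{n}=e^{\Theta(n^2\log n)}$ --- i.e.\ the defining-set route is put forward as a plausible avenue of attack on the conjecture, not ruled out. Your entropy-based line-exposure scheme and the planar-ternary-ring counting idea are reasonable alternative directions, but as written neither supplies the rigidity estimate they would need, so the proposal should be read as a research program rather than a proof.
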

\noindent As we obtain this many $(n-1)$-MOLS from a single projective plane, this would provide qualitative evidence in favor of the non-existence conjectures concerning projective planes. Note that a stronger result than Conjecture~\ref{conj:projplanes} (and a possible avenue of attack) would be to improve Theorem~\ref{thm:definingsets}, which is not known to be tight.  The best known lower bound for Theorem~\ref{thm:definingsets} is only linear in $n$, and, if every projective plane were to indeed contain a defining set of $O(n)$ lines, that would imply $\numkmols{n-1}{n} = e^{\Theta(n^2 \log n)}$.

\paragraph{Sudoku squares} As mentioned in Section~\ref{sec:gerechte}, Sudoku squares are a special class of gerechte designs of order $n$, where $n = m^2$, with the array partitioned into $m \times m$ subsquares in the natural way. After Golomb~\cite{golomb200611214} asked about the existence of a pair of orthogonal Sudoku squares of order~9 (corresponding to the popular puzzle), systems of {\em $k$ mutually orthogonal Sudoku squares ($k$-MOSS)} have been studied by several authors. This research has primarily sought to determine the largest $k$ for which a $k$-MOSS of order $n$ can exist; we refer the reader to~\cite{bailey2008gerechte, keedwell2007sudoku, keedwell2010constructions, lorch2009linearsudoku, lorch2010combings, pedersen2009MOSLS} for constructions and results in this direction.

One may ask the same counting questions as before for this restricted class of Latin squares, and these are relatively less well-studied. The number of Sudoku squares of order $n$ is known to be $\parens*{(1+o(1))\frac{n}{e^3}}^{n^2}$; the upper bound is shown independently by Luria~\cite{luria2017new} (using entropy) and Berend~\cite{berend2018sudoku} (using Br\'egman's Theorem), while the matching lower bound is due to Keevash~\cite{keevash2018coloured}.

To enumerate $k$-MOSS for fixed $k$, we extend an idea of Keevash, defining a $4$-uniform hypergraph $H$ with vertices $V(H) = \set{x_1,x_2,y_1,y_2,z_1^{(1)}, z_2^{(1)}, \dots, z_1^{(k)},z_2^{(k)}}$ and edges $\set{x_1, x_2, y_1, y_2}$, $\set{x_1, x_2, z_1^{(i)}, z_2^{(i)}}$, $\set{y_1, y_2, z_1^{(i)}, z_2^{(i)}}$, $\set{x_1, y_1, z_1^{(i)}, z_2^{(i)}}$ and $\set{z_1^{(i)}, z_2^{(i)}, z_1^{(j)}, z_2^{(j)}}$ for all $1 \le i < j \le k$.  Letting $H(\sqrt{n})$ be the $(2k+4)$-partite $4$-uniform hypergraph obtained by blowing each vertex up into $\sqrt{n}$ new vertices, it follows that a $k$-MOSS is equivalent to a decomposition of $H(\sqrt{n})$ into copies of $H$.  For fixed $k$, the results of Luria and Keevash show there are $\parens*{(1 + o(1)) \frac{n^k}{e^{\binom{k+3}{2} - 3}}}^{n^2}$ such decompositions.

\medskip

Our results allow us to bound the number of ways of extending a $k$-MOSS by an additional Sudoku square.  Since each cell shares its row (or column) with $\sqrt{n}-1$ other cells from the same subsquare, we have $r_{\ell} = c_{\ell} = \sqrt{n} - 1$ for each $\ell \in [n^2]$.  Applying Theorem~\ref{thm:numberextensions} shows that our upper bounds on $k$-MOSS coincide with our upper bounds on $(k+1)$-MOLS.  In particular, the bound in (a) is once again tight, as it matches the average number of extensions of a $k$-MOSS.

\begin{corollary}\label{cor:numberSudokuextensions}
\begin{enumerate}[(a)]
    \item For all fixed $k$, the maximum number of extensions of a $k$-MOSS of order $n$ to a $(k+1)$-MOSS is $\parens*{(1 + o(1)) \frac{n}{e^{k+3}}}^{n^2}$.
    \item For $k = k(n) \ge 0$, the logarithm of the number of $k$-MOSS is at most
    \begin{enumerate}[(i)]
        \item $\parens*{(k+1) \log n - \binom{k+3}{2} + 3 + (k+1)^2 n^{-1/(k+3)} + o(1) } n^2 \hfill \text{if } k = o(\log n),$
        \item $\parens*{c(\beta) + o(1)} k n^2 \log n \hfill \text{if } k = \beta \log n, \text{ for fixed } \beta > 0,$
        \item $\parens*{\tfrac12 + o(1)} (\log k - \log \log n) n^2 \log^2 n \hfill \text{if } k = \omega( \log n ),$
    \end{enumerate}
    where $c(\beta)$, as in Corollary~\ref{cor:numsets}, is defined to be $1 - \beta^{-1} \int_0^{\beta} x (1 - e^{-1/x}) \dif x$.
\end{enumerate}
\end{corollary}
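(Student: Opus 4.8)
The plan is to push through the reduction to nearly orthogonal arrays already sketched around the statement, and then apply Theorem~\ref{thm:numberextensions} with the values $r_\ell = c_\ell = \sqrt n - 1$ forced by the $m\times m$ subsquare partition. After that the two parts run parallel to the proofs of Corollary~\ref{cor:asymptoticextension} and Corollary~\ref{cor:numsets} respectively.

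For part~(a), the lower bound is the usual averaging argument: every $(k+1)$-MOSS restricts to a $k$-MOSS, so the maximum number of extensions is at least $L^{(k+1)}_{\mathrm{MOSS}}(n)/L^{(k)}_{\mathrm{MOSS}}(n)$, which, using the Luria--Keevash count $L^{(j)}_{\mathrm{MOSS}}(n) = \parens*{(1+o(1))n^j/e^{\binom{j+3}{2}-3}}^{n^2}$ recorded above together with $\binom{k+4}{2}-\binom{k+3}{2} = k+3$, equals $\parens*{(1+o(1))n/e^{k+3}}^{n^2}$. For the upper bound, a $k$-MOSS is an $NOA(n,k+3)$ whose third column is the subsquare partition, and an extension to a $(k+1)$-MOSS adds one column, so Theorem~\ref{thm:numberextensions} with $d=k+3$ bounds the logarithm of the number of extensions by $n^2\int_0^1\log\bigl(1 + 2(\sqrt n - 1)t^{k+2} + (\sqrt n - 1)^2 t^{k+3}\bigr)\dif t$. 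I would estimate this by splitting off the dominant term: writing $\log(1 + at^{k+2} + bt^{k+3}) = \log(1+bt^{k+3}) + \log\bigl(1 + \tfrac{at^{k+2}}{1+bt^{k+3}}\bigr)$ with $a = 2(\sqrt n-1)$ and $b = (\sqrt n-1)^2$, the first piece integrates by Lemma~\ref{lem:integral} to $\log((1+o(1))n/e^{k+3})$ for fixed $k$, and the second, bounded via $\log(1+x)\le x$ and split at the point where $bt^{k+3}=1$, contributes only $o(1)$. This matches the lower bound and gives~(a).

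For part~(b), I would build the $k$-MOSS one Sudoku square at a time, starting from the trivial $NOA(n,3)$ with columns $v_1,v_2,v_3$; passing from a $(j-1)$-MOSS to a $j$-MOSS adds a column to an $NOA(n,j+2)$, so summing the bounds of Theorem~\ref{thm:numberextensions} over the steps gives $\log L^{(k)}_{\mathrm{MOSS}}(n) \le n^2\sum_{d=3}^{k+2}\int_0^1\log\bigl(1 + 2(\sqrt n - 1)t^{d-1} + (\sqrt n - 1)^2 t^d\bigr)\dif t$. The key elementary point is that $2(\sqrt n - 1) + (\sqrt n - 1)^2 t \le 2(\sqrt n - 1) + (\sqrt n - 1)^2 = n-1$ for all $t\in[0,1]$, so after factoring out $t^{d-1}$ each summand is at most $\int_0^1\log(1+(n-1)t^{d-1})\dif t = I_{d-1}$; hence $\log L^{(k)}_{\mathrm{MOSS}}(n) \le n^2\sum_{d=2}^{k+1}I_d$, which is exactly the sum estimated in the proof of Corollary~\ref{cor:numsets}. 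Feeding in Lemma~\ref{lem:integral} and repeating the three regime computations carried out there (for $k = o(\log n)$, $k = \beta\log n$ and $k = \omega(\log n)$) then yields~(i)--(iii).

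I expect the main obstacle to be the integral estimate in part~(a). A naive bound $\log(1 + at^{k+2} + bt^{k+3}) \le \log(1+at^{k+2}) + \log(1+bt^{k+3})$ is too wasteful, since $\int_0^1\log(1+at^{k+2})\dif t$ is of order $\tfrac12\log n$, comparable to the target $\log n$, and would cost a constant factor in the base of the exponent; one really has to exploit that $bt^{k+3}$ already overtakes $at^{k+2}$ at $t$ of order $a/b \asymp n^{-1/2}$, below which the whole integrand is negligible. Part~(b) does not meet this difficulty, because there the crude per-step bound (each integral at most $I_{d-1}$) already suffices, so once the estimate in~(a) is in place everything else is the bookkeeping already done in the proof of Corollary~\ref{cor:numsets}.
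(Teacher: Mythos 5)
Your proposal is correct, and part~(a) is essentially the paper's own argument: the same factorization $\log(1+at^{k+2}+bt^{k+3}) = \log(1+bt^{k+3}) + \log\bigl(1+\tfrac{at^{k+2}}{1+bt^{k+3}}\bigr)$, with the remainder integral of order $\log n/\sqrt{n}$ (the paper absorbs the cross term slightly differently, splitting off $1+(n-1)t^{k+3}$ and leaving a numerator $2(\sqrt n-1)(1-t)t^{k+2}$, but this is cosmetic). In part~(b) you deviate: the paper reuses the part~(a) factorization at every step, concluding that each per-step bound is $I_{d}+o(1)$ for $d=j+3$, so that a $k$-MOSS is counted like a $(k+1)$-MOLS via $\sum_{d=3}^{k+2}I_d$ plus an accumulated error; you instead use the pointwise inequality $2(\sqrt n-1)t^{d-1}+(\sqrt n-1)^2t^{d}\le (n-1)t^{d-1}$ to bound each step by $I_{d-1}$, landing exactly on the MOLS sum $\sum_{d=2}^{k+1}I_d$ with no error bookkeeping at all. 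Your route is cleaner, and it does suffice: for (ii) and (iii) it reproduces the stated bounds verbatim from Corollary~\ref{cor:numsets}, and for (i) it gives $\bigl(k\log n-\binom{k+2}{2}+1+k^2n^{-1/(k+2)}\bigr)n^2$, which is smaller than the stated $\bigl((k+1)\log n-\binom{k+3}{2}+3+\cdots\bigr)n^2$ precisely because their difference is $\log n - k + o(1)>0$ in the regime $k=o(\log n)$ --- a point worth stating explicitly, since your bound and the paper's differ by a shift of one in the index of $I_d$. The only cost of your version is that it is lossier by roughly a factor $e^{kn^2}$, which is harmless for every claim in the corollary but would matter if one wanted the sharpest constant in the second-order term.
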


\begin{proof}
    A $k$-MOSS corresponds to a $NOA(n,k+3)$ with $r_\ell = c_\ell = \sqrt{n} - 1$.  Substituting these parameters into Theorem~\ref{thm:numberextensions}, the logarithm of the number of extensions of a $k$-MOSS is at most
    \begin{align*}
        &n^2 \int_0^1 \log \parens*{1 + 2(\sqrt{n} - 1)t^{k+2} + (n - 2\sqrt{n} + 1) t^{k+3}} \dif t \\
        &= n^2 \int_0^1 \log \parens*{1 + (n-1)t^{k+3}} \dif t + n^2 \int_0^1 \log \parens*{1 + \frac{2(\sqrt{n} - 1)(1 - t)t^{k+2}}{1 + (n-1)t^{k+3}}} \dif t.
    \end{align*}
    The first integral is simply $I_{k+3}$, as evaluated in Lemma~\ref{lem:integral}.
    
    To bound the second integral, observe that
    \begin{align*}
        \int_0^1 \log \parens*{1 + \frac{2(\sqrt{n} - 1)(1 - t)t^{k+2}}{1 + (n-1)t^{k+3}}} \dif t &\le \int_0^1 \frac{2(\sqrt{n} - 1)(1 - t)t^{k+2}}{1 + (n-1)t^{k+3}} \dif t \\
        &\le \int_0^1 \frac{(2 \sqrt{n - 1}) t^{k+2}}{1 + (n-1)t^{k+3}} \dif t \\
        &= \frac{2}{(k+3) \sqrt{n-1}} \log \parens*{1 + (n-1)t^{k+3}} \Big|_0^1 = \frac{2 \log n}{(k+3) \sqrt{n-1}}.
    \end{align*}
    
    Thus, even if we sum up over all $k \in [n]$, the contribution from this second integral is a lower order error term.  Hence our upper bound on the logarithm of the number of extensions of a $k$-MOSS is $n^2 (I_{k+3} + o(1))$, and therefore we obtain the same enumeration as when extending a $(k+1)$-MOLS.
\end{proof}

Aside from the general lower bound of Keevash~\cite{keevash2018coloured}, we are not aware of any lower bounds on the number of $k$-MOSS. It would therefore be interesting to find lower bounds on the number of $k$-MOSS when $k$ grows with $n$. 
\begin{problem}
How tight are the upper bounds in Corollary~\ref{cor:numberSudokuextensions}(b)?  That is, for $k = k(n)$ that grows with $n$, can we show the existence of many distinct $k$-MOSS?
\end{problem}

\bibliographystyle{amsplain}
\bibliography{orthogonal_latin_squares}
\end{document}